\documentclass[pdflatex,sn-mathphys-num]{sn-jnl}

\usepackage[T1]{fontenc}
\usepackage{lmodern}
\usepackage{amsmath,amssymb,amsfonts}
\usepackage{amsthm}
\usepackage{mathtools}
\usepackage{bm}
\usepackage{enumitem}
\usepackage{microtype}

\newcommand{\bb}{\mathbb}
\newcommand{\R}{\bb R}
\newcommand{\N}{\bb N}
\newcommand{\cF}{\mathcal F}
\newcommand{\cH}{\mathcal H}
\newcommand{\eps}{\varepsilon}
\newcommand{\zero}{\bm 0}
\newcommand{\e}{\bm e}
\newcommand{\x}{\bm x}
\newcommand{\y}{\bm y}
\newcommand{\z}{\bm z}
\newcommand{\w}{\bm w}
\newcommand{\vve}{\bm v}
\newcommand{\p}{\bm p}
\newcommand{\q}{\bm q}
\newcommand{\thetaa}{\bm\theta}
\newcommand{\one}{\bm 1}
\newcommand{\val}{\textup{val}}
\newcommand{\intrvol}[1]{\operatorname{vol}_{#1}}
\newcommand{\ip}[2]{\left\langle #1,#2\right\rangle}
\newcommand{\norm}[1]{\left\lVert #1\right\rVert_2}
\newcommand{\abs}[1]{\left\lvert #1\right\rvert}
\newcommand{\dist}{\operatorname{dist}}
\newcommand{\conv}{\operatorname{conv}}
\newcommand{\aff}{\operatorname{aff}}
\newcommand{\lin}{\operatorname{lin}}
\newcommand{\proj}{\operatorname{proj}}
\newcommand{\argminop}{\operatorname*{argmin}}
\newcommand{\argmaxop}{\operatorname*{argmax}}
\newcommand{\widetildeTheta}{\widetilde{\Theta}}

\newtheoremstyle{thmstyleone}%
{12pt}{12pt}{\small\itshape}{0pt}{\small\bfseries}{}{.5em}{}
\newtheoremstyle{thmstyletwo}%
{12pt}{12pt}{\small\normalfont}{0pt}{\small\itshape}{}{.5em}{}
\newtheoremstyle{thmstylethree}%
{12pt}{12pt}{\small\normalfont}{0pt}{\small\bfseries}{}{.5em}{}

\theoremstyle{thmstyleone}
\newtheorem{theorem}{Theorem}
\newtheorem{proposition}[theorem]{Proposition}
\newtheorem{lemma}[theorem]{Lemma}
\newtheorem{corollary}[theorem]{Corollary}

\theoremstyle{thmstyletwo}

\theoremstyle{thmstylethree}
\newtheorem{definition}{Definition}

\begin{document}

\hypersetup{
  pdftitle={Closing the Oracle-Complexity Gap in Derivative-Free Convex Optimization: A Near-Quadratic Lower Bound from Exact Function Values},
  pdfauthor={Anonymous Authors},
  pdfkeywords={convex optimization, information complexity, value oracle, nonsmooth optimization, convex geometry},
  bookmarksdepth=3
}

\title[Near-quadratic exact-value complexity]{Closing the Oracle-Complexity Gap in Derivative-Free Convex Optimization: A Near-Quadratic Lower Bound from Exact Function Values}

\author[1]{\fnm{Phillip} \sur{Kerger}}
\affil[1]{\orgdiv{Department of Industrial Engineering and Operations Research, UC Berkeley}}

\abstract{
We study the deterministic query complexity of minimizing a convex Lipschitz function over a $d$-dimensional Euclidean ball using only exact function values.  At accuracy $\Theta(d^{-1/2})$, the previously applicable lower bound was $\Omega(d)$, inherited from the stronger full first-order oracle, while an upper bound from Protasov's value-only method requires $O(d^2\log^2 d)$ evaluations. By providing a lower bound of $\Omega(\,\frac{d^2}{\log(d+1)})$ on the oracle complexity in this setting, we thereby close this gap dating back to 1996, up to polylogarithmic factors. Furthermore, we are able to lift this result to the mixed-integer setting: Mixed-integer convex optimization with $d$ continuous and $n$ discrete variables using function values requires $\tilde{\Omega}(d^2\cdot 2^n)$ queries. 
}

\keywords{Convex optimization, information complexity, oracle complexity, zeroth-order oracle, nonsmooth optimization, convex geometry}

\maketitle

\section{Introduction}\label{sec:intro}

Optimization must often proceed without access to derivatives.  In black-box and simulation-based optimization, evaluating a candidate design may require running a physical experiment or an expensive simulator~\cite{ConnScheinbergVicente2009}.  In bandit optimization, the learner observes the loss only at the action or actions it selects~\cite{DuchiJordanWainwrightWibisono2015,Shamir2013,Shamir2017}.  When such evaluations dominate the cost of optimization, a fundamental question is how many of them are needed to find an approximately optimal point.  We address a clean worst-case version of this question for deterministic convex optimization.

Oracle complexity formalizes this question by isolating the number of observations of an unknown objective.  The foundational work of Nemirovski and Yudin~\cite{NemirovskiYudin1983} established the theory for first-order models, in which a query returns a separating hyperplane, a subgradient, or a value--subgradient pair.  For the weaker deterministic function-value-only model, however, the best bounds have remained separated by a polynomial factor.  We study its most informative noiseless form: each query returns the exact objective value.  We impose no finite-precision, time, or memory restriction on the algorithm and study purely the number of function evaluations required.

Specifically, let
 $B_d:=\{\x\in\R^d:\norm{\x}\leq1\}$
and let $\cF_d$ contain the convex, $1$-Lipschitz functions $f:B_d\to\R$ with $f(\zero)=0$.  We write $Q_{\val}^{\det}(d,\eps)$ for the least number of exact-value queries sufficient, in the worst case over $f\in\cF_d$, to return $\widehat\x\in B_d$ satisfying
\[
 f(\widehat\x)-\min_{\x\in B_d}f(\x)\leq\eps.
\]
Section~\ref{sec:model} gives the formal model.

\subsection{Existing bounds and neighboring models}\label{subsec:published-gap}

Let $Q_{\mathrm{FO}}^{\det}(d,\eps)$ denote the corresponding complexity for globally defined convex $1$-Lipschitz objectives when each query returns the value and an arbitrary oracle-selected subgradient.  In the standard sequential model, projected subgradient descent and the classical Nemirovski--Yudin lower bound \cite{NemirovskiYudin1983} imply, for every fixed $\beta>0$,
\begin{equation}\label{eq:fo-benchmark}
 Q_{\mathrm{FO}}^{\det}\!\left(d,\frac{\beta}{\sqrt d}\right)=\Theta(d);
\end{equation}
see~\cite{NemirovskiYudin1983,Bubeck2015,BubeckJiangLeeLiSidford2019}.  More generally, the classical lower bound is $\Omega(\min\{d,\eps^{-2}\})$.  Because the restriction of every such global objective belongs to $\cF_d$ and a full first-order response contains the value, any value-only algorithm could ignore the subgradient.  Thus, before this work, the best lower bound applicable to unrestricted deterministic exact-value algorithms at accuracy $\Theta(d^{-1/2})$ was only $\Omega(d)$.

Memory-query tradeoffs for first-order convex optimization, initiated as an explicit question by Woodworth and Srebro~\cite{WoodworthSrebro2019}, give superlinear query lower bounds when the algorithm has subquadratic memory~\cite{MarsdenSharanSidfordValiant2024,BlanchardZhangJaillet2024}.  These results do not directly strengthen the preceding bound because our algorithms may use unlimited memory.  Nor can one place a $T$-query algorithm within the scope of these theorems merely by storing its transcript: an exact value is an arbitrary real number with potentially unbounded bit complexity, and later queries may depend discontinuously on earlier values.  Thus, without an additional model-preserving compression argument, the value transcript need not have a representation using $\widetilde O(T)$ bits.

In the opposite direction, Protasov~\cite{Protasov1996} gives a deterministic exact-value method needing
\[
 O\!\left(d^2\log(d+1)\log\frac1\eps\right)
\]
evaluations for relative objective error $\eps$, yielding $O(d^2\log^2(d+1))$ evaluations at accuracy $\eps = \Theta(d^{-1/2})$.  Thus the deterministic exact-value bounds have had a polynomial gap, $\Omega(d)$ versus $\tilde{O}(d^2)$, dating back to this 1996 upper bound by Protasov.

Several neighboring zeroth-order models have different complexities with tight existing results.  With randomized two-point feedback, algorithms achieve expected error $O(LR\sqrt{d/T})$ after $T$ two-point rounds, and matching minimax lower bounds hold in the standard stochastic or online formulations~\cite{DuchiJordanWainwrightWibisono2015,Shamir2017}.  Under noisy one-point feedback, even normalized smooth strongly convex problems can have minimax expected error of order $\min\{1,d/\sqrt T\}$~\cite{Shamir2013}.  For smooth, generally nonconvex derivative-free optimization, probabilistically accurate models and noisy trust-region methods admit iteration-complexity and high-probability guarantees under assumptions tailored to those settings~\cite{ConnScheinbergVicente2009,CartisScheinberg2018,CaoBerahasScheinberg2024}.  These results are not directly comparable to the noiseless, nonsmooth, deterministic, exact-real model and therefore do not transfer to our setting.

Basu, Kerger, and Molinaro~\cite{BasuKergerMolinaro2025} recently proved near-quadratic lower bounds for bit and inner-product first-order oracles and explicitly noted that a comparable bound was unknown even when the oracle reveals full function values.  The responses of the oracles studied there provide first-order information but only finitely many bits; an exact value supplies no subgradient or separating normal but may encode unboundedly many bits.  Neither result therefore transfers automatically to the other model.

\subsection{Our contribution}\label{subsec:contribution}

We close the deterministic exact-value dimension gap up to logarithmic factors by providing a significantly stronger lower bound.

\begin{theorem}\label{thm:main-intro}
There are universal constants $c,C,\eps_0>0$ and $d_0\in\N$ such that, for every $d\geq d_0$,
\[
 c\,\frac{d^2}{\log(d+1)}
 \leq
 Q_{\val}^{\det}\!\left(d,\frac{\eps_0}{\sqrt d}\right)
 \leq
 C d^2\log^2(d+1).
\]
In particular,
\[
 Q_{\val}^{\det}\!\left(d,\frac{\eps_0}{\sqrt d}\right)
 =\widetildeTheta(d^2).
\]
\end{theorem}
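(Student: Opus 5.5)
The upper bound is inherited directly from Protasov's method: at relative accuracy $\eps=\eps_0/\sqrt d$ it uses $O(d^2\log(d+1)\log(\sqrt d/\eps_0))=O(d^2\log^2(d+1))$ exact evaluations. Since $f\in\cF_d$ has range contained in $[-1,1]$, relative error converts to absolute error up to constants. All the work is therefore in the lower bound $c\,d^2/\log(d+1)$, and I would prove it by an adversary (resisting-oracle) argument tailored to exact values.

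\textbf{Hard family.} I would take $f_{\mathcal S}(\x)=\max\{\max_{\vve\in\mathcal S}\ip{\vve}{\x}-\tfrac{\gamma}{\sqrt d},\ h(\x)\}$. Here $h$ is a fixed, known convex ``floor'' such as $h(\x)=-\tfrac{\gamma}{2\sqrt d}+\eta\norm{\x}$, chosen so that any point with small objective must be nearly orthogonal to a hidden set $\mathcal S$ of $k\approx d/2$ unit vectors. A random $k$-frame is, with high probability, far from every fixed point, so $\mathcal S$ must be essentially identified. The key counting heuristic is that an exact value at $\x$ reveals either nothing (when $h$ is active) or one scalar $\max_{\vve}\ip{\vve}{\x}$. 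That is only one real number, so \emph{if} it could be forced to carry $O(\log d)$ bits of usable information, the $\Theta(d^2)$ ``bits'' of a net over $k$-frames in the Grassmannian would force $\Omega(d^2/\log d)$ queries.

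\textbf{Adversary.} Exact reals can encode unboundedly many bits, so a literal bit count is unavailable. Instead, following the resisting strategy of~\cite{BasuKergerMolinaro2025}, I would have the oracle answer from a \emph{quantized} family. The hidden vectors are drawn from a net in which each inner product $\ip{\vve}{\x}$ with a queried direction is pinned to one of $\mathrm{poly}(d)$ prescribed levels. The adversary maintains a feasible set of frames consistent with all answers and replies with the value common to the largest class. Each answer then shrinks the feasible family by at most a $\mathrm{poly}(d)$ factor, i.e.\ loses $O(\log d)$ bits. The work lies in showing that the subsequent function can be modified consistently: the adversary commits only to linear constraints $\ip{\vve_i}{\x_t}\le$ (level), and the later construction must respect all past exact values. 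To do this I would use a Nemirovski-style orthogonalization in which each committed vector is orthogonal to the span of the past queries up to the quantized component. Consistency of exact values then holds because the max remains attained at an already-committed piece, or at $h$.

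\textbf{Conclusion and main obstacle.} After $T$ queries the feasible family still has size at least $\exp(\Omega(d^2)-O(T\log d))$. Provided $T\le c\,d^2/\log d$, there remain two consistent functions whose $\eps_0/\sqrt d$-minimizer sets are disjoint; this follows from a packing bound on frames at chordal distance $\Omega(1)$ together with a volume estimate on the Grassmannian. Hence the algorithm fails on one of them. The main obstacle is precisely the quantization step: arguing that a single exact value from an adaptively chosen query can be forced to reveal only $O(\log d)$ bits. This requires choosing the net and the levels so that exact values reveal nothing beyond the quantized level while the max-structure, Lipschitz constant and convexity are preserved. I would first try randomizing the offsets $\gamma$ per piece, using a discrete grid of spacing $d^{-3}$ and a pigeonhole-over-levels argument.
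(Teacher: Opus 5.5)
Your upper bound matches the paper's: Protasov's method, using that the range is at most $2$. The lower bound has a genuine gap at the step you flag yourself, and the quantized-net strategy cannot close it. Whatever value $y$ the adversary returns at $\x_t$, some piece must attain it \emph{exactly} (unless the floor is active). So the active hidden vector is subject to an equality $\ip{\vve}{\x_t}-\gamma=y$, not an inequality. On any finite net or discrete grid of offsets, a generic query $\x_t$ makes $\vve\mapsto\ip{\vve}{\x_t}-\gamma$ injective. The equality then collapses the surviving candidates for the active piece to essentially one point. Pigeonholing over levels does not help, because the exact reply is a single level and distinct net points land on distinct exact levels. Hence the family does not shrink by only a $\mathrm{poly}(d)$ factor per query, and no choice of net rescues a cardinality or bit count. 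For exact values, the cost of an answer has to be measured by dimension, not bits; this is why the finite-bit argument of~\cite{BasuKergerMolinaro2025} does not transfer. Separately, your sample floor makes the family trivial: $f_{\mathcal S}\geq h\geq h(\zero)=f_{\mathcal S}(\zero)$, so the origin minimizes every member.

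The paper's missing idea is a continuous, product-form adversary with a codimension-plus-volume potential. It takes $f_W(\x,\z)=\max_i\{a x_i+\ip{\w_i}{\z}\}$ and maintains compact convex row sets $P_i\subseteq\tau B_m$ such that \emph{every} $W\in\prod_iP_i$ reproduces the transcript exactly. The reply $y$ is placed at or above the upper $\frac1{4m}$-quantile of every row functional $\ell_i$. In an informative step, one row is replaced by the section $P_i\cap\{\ell_i=y\}$. That row loses one dimension, but by Lemma~\ref{lem:quantile-section} it keeps $(k_i-1)$-volume at least $V_i/(8m\tau)$. Every other row is truncated to $\{\ell_j\leq y\}$ and loses at most a $\frac1{4m}$ fraction of its volume. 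Total codimension is then at most $T$ and normalized log-volume loss is $O(T\log m)$, against $m^2$ available dimensions. This is where $d^2/\log d$ comes from, and no quantization is ever needed.

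Your endgame also lacks its quantitative content. A Grassmannian packing gives two far-apart consistent frames, but not that their $\eps_0/\sqrt d$-solution sets are disjoint. The paper gets this in three steps. Urysohn's inequality plus averaging gives a common direction with aggregate width at least $m\tau/4$ over the good rows. The near-uniform barycentric weights of $\p_W$ (Lemma~\ref{lem:barycentric}) turn that width into minimizer separation $>1/600$. Finally, the growth bound~\eqref{eq:quadratic-growth} with $\norm{\p_W}\geq a/\sqrt m$ converts the separation into error $\Omega(d^{-1/2})$.
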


  Together with~\eqref{eq:fo-benchmark}, it gives a polynomial separation between deterministic exact-value and full first-order information at the accuracy constant supplied by the theorem.  
  To simplify notation, we take $d$ to be even throughout this paper. Scaling extends the result to an $L$-Lipschitz objective on a ball of radius $R$ at accuracy $\eps_0LR/\sqrt d$; monotonicity gives the same lower bound at smaller errors.

We also obtain a near-optimal mixed-integer consequence of the transfer theorem of Basu, Jiang, Kerger, and Molinaro~\cite[Theorem~7]{BasuJiangKergerMolinaro2025}.  On $[0,1]^n\times[-1,1]^d$, suppose that the first $n$ variables are restricted to be integral, each binary fiber is Euclidean $1$-Lipschitz in the $d$ continuous variables, and value queries may be made at arbitrary fractional points of the product domain.  Write $Q_{\mathrm{MI},\val}^{\det}(n,d,\eps)$ for the deterministic exact-value complexity of this problem; Section~\ref{sec:mixed-transfer} gives the full formal model.

\begin{corollary}[Mixed-integer consequence]\label{cor:mixed-value}
There are universal constants $c,C,\eps_{\mathrm{MI}}>0$ and $d_0\in\N$ such that, for every $n\geq1$ and $d\geq d_0$,
\[
 c\,2^n\frac{d^2}{\log(d+1)}
 \leq
 Q_{\mathrm{MI},\val}^{\det}\!\left(n,d,\frac{\eps_{\mathrm{MI}}}{\sqrt d}\right)
 \leq
 C\,2^n d^2\log^2(d+1).
\]
Consequently,
\[
 Q_{\mathrm{MI},\val}^{\det}\!\left(n,d,\frac{\eps_{\mathrm{MI}}}{\sqrt d}\right)
 =\widetildeTheta(2^n d^2).
\]
\end{corollary}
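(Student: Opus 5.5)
The proof will reduce the corollary to Theorem~\ref{thm:main-intro} in both directions. For the lower bound we invoke the transfer theorem of Basu, Jiang, Kerger, and Molinaro~\cite[Theorem~7]{BasuJiangKergerMolinaro2025}. It converts a lower bound for a purely continuous convex problem in dimension $d$ into a lower bound larger by a factor $2^n$ for the mixed-integer problem with $n$ integer variables.

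\textbf{Lower bound.} We first restate the continuous problem on $[-1,1]^d$. The ball $B_d$ sits inside the cube, so a hard family on $B_d$ extends to the cube. We extend each hard instance $f$ to $\tilde f(\x)=f(\proj_{B_d}\x)+\dist(\x,B_d)$. This extension is convex and $1$-Lipschitz, has the same minimum, and its minimizer set lies in $B_d$. A value query to $\tilde f$ is answered by a single query to $f$. Consequently the $\Omega(d^2/\log(d+1))$ bound at accuracy $\eps_0/\sqrt d$ carries over to the cube, with possibly adjusted constants. We then feed this bound into \cite[Theorem~7]{BasuJiangKergerMolinaro2025}. That construction places an independent, shifted copy of a hard continuous instance on each of the $2^n$ binary fibers and glues them into a convex function on $[0,1]^n\times[-1,1]^d$. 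The offsets make every fiber potentially optimal, so an algorithm must essentially solve every fiber. The step we must check, and which we expect to be the main obstacle, is that the transfer theorem applies to \emph{exact-value} oracles queried at fractional points. Concretely, the value of the glued function at a fractional point must be computable from a bounded number of values of the fiber functions, and queries on one fiber must reveal nothing about the others. If the theorem is stated only for first-order or generic local oracles, we would re-verify its adversary argument. The glued function is a max or convex combination of affine lifts of fiber functions, so each of its values is a function of $O(1)$ fiber values. A $T$-query mixed-integer algorithm therefore yields $O(T)$ queries spread across the fibers, and averaging gives some fiber that received $O(T/2^n)$ queries. Choosing the accuracy $\eps_{\mathrm{MI}}$ as a constant fraction of $\eps_0$, to absorb the gluing offsets, then gives $T\geq c\,2^n d^2/\log(d+1)$.

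\textbf{Upper bound.} We enumerate all $2^n$ points $\z\in\{0,1\}^n$. On each fiber $g_{\z}(\y)=F(\z,\y)$ we run Protasov's method on $[-1,1]^d$, which is contained in the ball of radius $\sqrt d$, with the rescaling noted after Theorem~\ref{thm:main-intro}. The scaled accuracy becomes $\eps_{\mathrm{MI}}/(\sqrt d\cdot\sqrt d)$, which is relative accuracy $\mathrm{poly}(1/d)$, so Protasov's bound still gives $O(d^2\log^2(d+1))$ evaluations per fiber. We finally return the best of the $2^n$ candidates, which costs $2^n$ further evaluations. The total is $C\,2^n d^2\log^2(d+1)$, and combining the two bounds gives the $\widetildeTheta(2^nd^2)$ statement.
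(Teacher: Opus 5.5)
\textbf{The main gap: the common-optimum hypothesis.} Your lower bound never checks the hypothesis of the transfer theorem (Theorem~\ref{thm:bjkm-transfer}) that all continuous source instances have the same optimal value, and the hard family fails it badly. The optima $-\norm{\p_W}$ of $f_W\in\cH_d$ fill an interval of length $\Delta_d=\frac{a}{\sqrt m}(\sqrt{1+\Gamma^{-2}}-1)\approx 354\,\eps_d$, where $\eps_d=\eps_0/\sqrt d$. Both endpoints are attained: by $W=0$, and by all rows equal to one vector of norm $\tau$. Shrinking $\eps_{\mathrm{MI}}$ by a constant cannot ``absorb'' this spread; it only makes the ratio worse. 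Your fallback averaging sketch does not close the gap either. Knowing that some fiber received $O(T/2^n)$ queries gives nothing unless the adversary can still make that under-explored fiber the optimal one without contradicting what the other fibers have revealed, and the common-optimum hypothesis is what makes this possible.

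The missing idea is binning and flooring. Partition the interval of optima into $K=O(1)$ bins of length at most $\eps_d/2$. Some bin's subclass retains the $\Omega(d^2/\log(d+1))$ bound, because running an optimal algorithm for each of the $K$ subclasses and returning the best of the $K$ outputs solves all of $\cH_d$ with $\sum_jT_j+K$ queries. Then replace $f_W$ by $g_W=\frac12\max\{\Phi_W,\beta_d\}$, where $\beta_d$ is that bin's right endpoint. Every $g_W$ has optimal value exactly $\beta_d/2$, and one query to $f_W$ still simulates one query to $g_W$. An $\eps_d/4$-solution of $g_W$ radially projects to an $\eps_d$-solution of $f_W$, which is where $\eps_{\mathrm{MI}}=\eps_0/4$ comes from.

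\textbf{The box extension is not convex.} Your extension $\tilde f(\x)=f(\proj_{B_d}\x)+\dist(\x,B_d)$ is not convex, so the box instances would not be admissible. For one linear piece $f=\ip{\vve}{\cdot}$, outside $B_d$ one has $\tilde f(\x)=\ip{\vve}{\x}/\norm{\x}+\norm{\x}-1$. This is affine along rays, so the radial--radial Hessian entry vanishes. For a unit $\bm u\perp\x$, however, the radial--tangential entry is $-\ip{\vve}{\bm u}/\norm{\x}^2$. Hence the Hessian is indefinite wherever $\vve$ is not parallel to $\x$. This happens inside $[-1,1]^d$ where one piece of $f_W$ is active, e.g.\ at $\q=(0.95\e_1+0.9\e_2,\zero)$. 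The extension is also not $1$-Lipschitz, since its tangential slope adds to the unit radial slope.

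The repair uses the global, positively homogeneous $f_W$. The function $\Phi_W(\q)=f_W(\q)+(\norm{\q}-1)_+$ is convex as a sum of convex functions. With $r=\max\{1,\norm{\q}\}$, it satisfies $\Phi_W(\q)=rf_W(\q/r)+(r-1)$, so the one-query simulation survives. The factor $r$ you dropped is exactly what restores convexity, and the factor $\frac12$ in $g_W$ restores $1$-Lipschitzness.

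\textbf{Oracle compatibility and upper bound.} The exact-value issue you single out as the main obstacle needs no re-derivation of the adversary. The identity together with the threshold maps $h_c$ is a hereditary permissible family: a known shift is post-processed, and $h_c(v+\delta)=h_{c+\delta}(v)$. One exact value answers every threshold query. Your upper bound matches the paper's. Note that Protasov's method runs directly on the cube; all you need from the radius-$\sqrt d$ containment is the objective-range bound $2\sqrt d$.
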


The lower bound is not a direct application of the transfer theorem: its continuous source instances must have a common optimal value and must use a box as their full feasible region.  Section~\ref{sec:mixed-transfer} supplies a constant-size binning and flooring argument for the first issue and a radial extension for the second.  The upper bound follows by enumerating the $2^n$ fibers and applying Protasov's method on each one.

Here $\widetildeTheta$ suppresses fixed powers of $\log(d+1)$.  The continuous theorem establishes one universal constant multiple of $d^{-1/2}$, not every such multiple.  The continuous upper bound also implies near-quadratic dimension dependence at every fixed polynomially smaller accuracy; see Corollary~\ref{cor:polynomial-accuracy}.

\subsection{Proof overview}\label{subsec:overview}

We first provide an overview of the proof, which will be presented in full detail in section \ref{sec:lower-proof}.

For the entirety of this paper, assume $d$ is even, set $m=d/2$, and write points as $(\x,\z)\in\R^m\times\R^m$.  The proof has four main components to it:

\begin{enumerate}[leftmargin=22pt,label=(\roman*)]
\item \emph{Hard family of functions.}  For rows $\w_i$ in a ball of radius $\tau=\Theta(m^{-1/2})$, consider
\[
 f_W(\x,\z)=\max_{i\in[m]}\{a x_i+\ip{\w_i}{\z}\},
 \qquad a=\tfrac12.
\]
These functions are convex and $1$-Lipschitz.  As support functions of explicit polytopes, their minimizers are determined by the points of those polytopes closest to the origin.

\item \emph{Exact resisting oracle.}  For each row, the adversary maintains a compact convex uncertainty set $P_i\subseteq\tau B_m$, with the invariant that every matrix in $P_1\times\cdots\times P_m$ reproduces the entire transcript.  A query either cuts one row by a large affine section, reducing its dimension by one, or only truncates rows by halfspaces that remove at most a $1/(4m)$ fraction of intrinsic volume.

\item \emph{Aggregate width.}  After $T=O(m^2/\log m)$ queries, the total codimension and logarithmic normalized-volume loss are small.  Hence linearly many row sets remain high-dimensional with large volume radius.  Urysohn's inequality and averaging yield one ambient direction in which their aggregate width is $\Omega(m\tau)$.

\item \emph{Indistinguishable minimizers.}  Moving all wide rows to opposite extremes in that common direction produces two transcript-compatible functions.  Their closest support-polytope points use nearly uniform barycentric weights, so the aggregate row displacement separates the two minimizers by a constant.  The support-function growth inequality then forces objective error $\Omega(m^{-1/2})=\Omega(d^{-1/2})$ for the algorithm's common output.
\end{enumerate}

\subsection{AI-assisted development and formal verification in Lean}\label{subsec:ai-usage}

Modern AI tools played a substantial role in developing the mathematical arguments in this work, and it is accurate to say that the AI model used solved the problem, not the author of this paper. In particular, GPT~5.6 Sol Pro was used following a workflow similar to that documented by OpenAI in connection with its recent preprint on the Cycle Double Cover Conjecture and accompanying prompt~\cite{OpenAI2026CDC,OpenAI2026CDCPrompt}. Similar success in the optimization literature has recently been achieved in \cite{ErnestRyu_AI-assisted-nesterov-proof}, on point convergence of Nesterov's accelerated gradient method.
For our work, this process first succeeded in proving a $\tilde{\Omega}(d^2)$ lower bound at accuracy $\epsilon$ of order $d^{-3}$, which was subsequently refined to the order-$d^{-1/2}$ result that we present here. Appendix~\ref{appendix} provides an account of the workflow as well as prompts used and answers received (including links to chats), which were followed by human verification by the author. The author takes full responsibility for every statement and proof in the manuscript. 

Furthermore, to cast aside any (well-warranted) skepticism of AI-assisted mathematical proofs, we provide formal verification of the proof of the $\tilde{\Omega}(d^2)$ lower bound using the Lean programming language, which can be found on the author's \href{https://github.com/PhillipKerger/zero-order-bounds-lean-verification}{Github page}: \begin{verbatim}
    https://github.com/PhillipKerger/zero-order-bounds-lean-verification
\end{verbatim}

\section{Oracle model and main results}\label{sec:model}

It is convenient to state the result with arbitrary radius and Lipschitz constant.  Throughout, $[m]:=\{1,\ldots,m\}$ for $m\in\N$, all logarithms are natural, and all Euclidean spaces carry their standard inner products.  For $R,L>0$, let
\[
 B_d(R):=\{\x\in\R^d:\norm{\x}\leq R\}
\]
and let $\cF_d(R,L)$ be the class of functions $f:B_d(R)\to\R$ such that
\begin{enumerate}[label=(\roman*),leftmargin=22pt]
\item $f$ is convex on $B_d(R)$;
\item $\abs{f(\x)-f(\y)}\leq L\norm{\x-\y}$ for all $\x,\y\in B_d(R)$;
\item $f(\zero)=0$.
\end{enumerate}
No differentiability, smoothness, strict or strong convexity, polyhedrality, or finite representation is assumed.

\begin{definition}[Deterministic exact-value complexity]\label{def:complexity}
A deterministic exact-value algorithm adaptively chooses points $\x_t\in B_d(R)$.  The point $\x_t$ may be an arbitrary deterministic function of $d,R,L,\eps$ and the exact previous answers $f(\x_1),\ldots,f(\x_{t-1})$.  After at most $T$ queries, the algorithm outputs an arbitrary point $\widehat\x\in B_d(R)$, which need not have been queried.  The algorithm may use unlimited computation and exact real arithmetic, and its query and output maps may be discontinuous.  These maps are fixed independently of the unknown objective, which is accessed only through the value oracle.

We denote by $Q_{\val}^{\det}(d,R,L,\eps)$ the smallest integer $T$ for which such an algorithm guarantees
\[
 f(\widehat\x)-\min_{\x\in B_d(R)}f(\x)\leq\eps
 \qquad\text{for every }f\in\cF_d(R,L).
\]
We abbreviate $Q_{\val}^{\det}(d,\eps):=Q_{\val}^{\det}(d,1,1,\eps)$.
\end{definition}

For a fixed algorithm, a \emph{transcript} is the sequence of query--response pairs it generates.  A function is \emph{consistent} with a transcript if it attains every recorded value at the corresponding query.  Thus a lower bound follows from a transcript admitting two consistent functions with disjoint sets of $\eps$-accurate solutions.

Every function evaluation counts, including repeated queries, line-search trials, rejected points, and evaluations in a deterministic batch; a batch can be serialized without changing its size.  Queries must lie in the ball, while computation involving the known domain and internally maintained sets is free.

\begin{lemma}[Scaling]\label{lem:scaling}
For every $d\geq2$, $R,L>0$, and $\eps>0$,
\[
 Q_{\val}^{\det}(d,R,L,\eps)
 =Q_{\val}^{\det}\!\left(d,\frac{\eps}{LR}\right).
\]
\end{lemma}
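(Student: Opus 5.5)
The plan is to exhibit a bijection between the two function classes that preserves convexity and exact-value transcripts and rescales optimization error by the factor $LR$, and then to transport algorithms along it in both directions. For $f\in\cF_d(R,L)$ define
\[
 g(\y):=\frac{1}{LR}\,f(R\y),\qquad \y\in B_d=B_d(1).
\]
First I verify that $g\in\cF_d$ and that $f\mapsto g$ is a bijection $\cF_d(R,L)\to\cF_d$ with inverse $f(\x)=LR\,g(\x/R)$. Convexity is preserved because $\y\mapsto R\y$ is linear and multiplying by a positive scalar keeps convexity. The normalization $g(\zero)=f(\zero)/(LR)=0$ holds. For the Lipschitz property, $\abs{g(\y)-g(\y')}\le \frac{1}{LR}L\norm{R\y-R\y'}=\norm{\y-\y'}$, and the same computation works for the inverse map. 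Moreover, $\min_{B_d}g=\frac{1}{LR}\min_{B_d(R)}f$, so
\[
 g(\widehat\y)-\min g=\frac{1}{LR}\bigl(f(R\widehat\y)-\min f\bigr).
\]
Hence $\widehat\y$ is $\eps/(LR)$-accurate for $g$ exactly when $R\widehat\y$ is $\eps$-accurate for $f$.

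Next I prove the inequality $Q_{\val}^{\det}(d,R,L,\eps)\le Q_{\val}^{\det}(d,\eps/(LR))$ by simulation. Take an optimal $T$-query algorithm $\mathcal A$ for the unit problem at accuracy $\eps/(LR)$. Build $\mathcal A'$ for $\cF_d(R,L)$ as follows. Whenever $\mathcal A$ would query $\y_t$ after seeing answers $v_1,\dots,v_{t-1}$, $\mathcal A'$ queries $\x_t=R\y_t\in B_d(R)$. On receiving $f(\x_t)$, it feeds $\mathcal A$ the value $v_t=f(\x_t)/(LR)=g(\y_t)$. At the end, $\mathcal A'$ outputs $R\widehat\y$.

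This $\mathcal A'$ is a legitimate deterministic algorithm in the sense of Definition~\ref{def:complexity}. Its query maps are compositions of $\mathcal A$'s maps with fixed scalings depending only on $R,L,\eps$, and arbitrary, possibly discontinuous, dependence is allowed. Its transcript is exactly $\mathcal A$'s transcript on $g$, so the accuracy equivalence above gives the guarantee. The reverse inequality follows by the symmetric simulation through the inverse map, with queries $\y_t=\x_t/R$ and fed values $LR\,g(\y_t)$.

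The only point needing care is that the simulation respects the model. Each simulated query costs exactly one oracle call, and queries stay in the correct ball. Also, the algorithm's dependence on the parameters $(d,R,L,\eps)$ versus $(d,\eps/(LR))$ is fixed in advance and not on the unknown function. I expect no real obstacle beyond this bookkeeping. The hypothesis $d\ge2$ plays no role other than matching the paper's standing conventions.
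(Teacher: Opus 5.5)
Your proposal is correct and follows essentially the same route as the paper. The paper uses the same rescaling $g(\y)=f(R\y)/(LR)$, simulates each query to $g$ by one query to $f$, notes that objective gaps are divided by $LR$, and invokes the inverse map $f(\x)=LR\,g(\x/R)$ for the reverse inequality; you simply spell out the membership checks and the bookkeeping in more detail.
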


\begin{proof}
For $f\in\cF_d(R,L)$, define $g:B_d\to\R$ by
\[
 g(\y):=\frac{f(R\y)}{LR}.
\]
Then $g\in\cF_d$, and a query to $g$ at $\y$ is simulated by one query to $f$ at $R\y$.  Objective gaps are divided by $LR$.  The inverse transformation $f(\x)=LR\,g(\x/R)$ proves the reverse inequality.
\end{proof}

\begin{theorem}[Deterministic lower bound]\label{thm:lower}
There are universal constants $c,\eps_0>0$ and $d_0\in\N$ such that, for every $d\geq d_0$,
\[
 Q_{\val}^{\det}\!\left(d,\frac{\eps_0}{\sqrt d}\right)
 \geq c\frac{d^2}{\log(d+1)}.
\]
When $d$ is even, the hard instance may be chosen as a maximum of $d/2$ linear functions.  One may take $\eps_0=10^{-7}$.
\end{theorem}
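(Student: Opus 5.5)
We follow the four-step outline of Section~\ref{subsec:overview}, with $d=2m$ even (odd $d$ is handled by embedding a $(d-1)$-dimensional instance that ignores the last coordinate) and $\tau=\kappa m^{-1/2}$ for a small constant $\kappa$. The hard family is $f_W(\x,\z)=\max_{i\in[m]}\{\tfrac12 x_i+\ip{\w_i}{\z}\}$ with rows $\w_i\in\tau B_m$. Each $f_W$ is convex, satisfies $f_W(\zero)=0$, and is Lipschitz with constant at most $\sqrt{1/4+\tau^2}\le1$. It is the support function of the polytope $K_W=\conv\{(\tfrac12\e_i,\w_i)\}$. Over the unit ball its minimizer is $-\p/\norm{\p}$, where $\p$ is the point of $K_W$ nearest the origin, and the minimum value is $-\norm{\p}$.

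\textbf{Resisting oracle.} For each row the adversary keeps a compact convex set $P_i\subseteq\tau B_m$, initially $\tau B_m$. It maintains the invariant that every $W\in P_1\times\cdots\times P_m$ gives the same answers on all past queries. Given a query $(\x,\z)$, the adversary decides which index is active. It either declares that row $i^\ast$ attains the max with value $v$, which imposes the affine constraint $\tfrac12x_{i^\ast}+\ip{\w_{i^\ast}}{\z}=v$ on $P_{i^\ast}$ and drops its dimension by one, or it imposes only the halfspaces $\tfrac12 x_i+\ip{\w_i}{\z}\le v$ on all the other rows. The choice of $v$ drives the accounting. The adversary picks $v$ as large as possible subject to every halfspace cutting only a $1/(4m)$ fraction of intrinsic volume from each current $P_i$. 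By a union bound over rows, some row attains the maximum at a point still in its set. If that maximizing row must be pinned, it is cut by an affine section chosen through a near-centroid, so that the section keeps intrinsic volume comparable to the original in the lower dimension; Grünbaum- or Brunn-type section bounds give this. I would make this step robust by tracking the normalized volume radius $\intrvol{k}(P_i)^{1/k}$ rather than the raw volume.

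\textbf{Accounting and width.} After $T\le cm^2/\log m$ queries, the total codimension is at most $T$. The total log-volume loss from truncations is $T\cdot m\cdot O(1/m)$ per row in aggregate, and each section costs $O(\log m)$ in normalized volume radius. Both budgets are therefore $o(m^2)$ relative to the $m\cdot m$ total degrees of freedom. By Markov's inequality, at least $m/2$ rows keep dimension $\ge m/2$ and volume radius $\ge c'\tau$. This accounting is where the $\log m$ loss enters. Urysohn's inequality then gives each such row a mean width $\Omega(\tau)$. Averaging the widths $h_{P_i}(\thetaa)+h_{P_i}(-\thetaa)$ over $\thetaa$ on the sphere, summed over the wide rows, produces a single direction $\thetaa$ with aggregate width $\sum_i \mathrm{width}_{\thetaa}(P_i)=\Omega(m\tau)=\Omega(\sqrt m)$.

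\textbf{Two consistent functions.} Let $W^\pm$ choose, in each wide row, the point of $P_i$ extremal in direction $\pm\thetaa$; the remaining rows agree between $W^+$ and $W^-$. Both functions are consistent with the transcript. The step I expect to be the main obstacle is showing that the nearest points $\p^\pm$ of $K_{W^\pm}$ have nearly uniform barycentric weights $\lambda_i\approx1/m$. This should follow because the $\tfrac12\e_i$ block dominates ($\tau\sqrt m=\kappa$ is small), so $\p\approx(\tfrac1{2m}\one,\sum_i\lambda_i\w_i)$, and the first-order optimality conditions give $\lambda_i=1/m+O(\kappa/m)$ by a perturbation argument. It follows that the $\z$-components differ by about $\frac1m\sum_i(\w_i^+-\w_i^-)$, which has $\thetaa$-component $\Omega(\tau)$. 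Since $\norm{\p}\approx(2\sqrt m)^{-1}$, the normalized minimizers differ by a constant, $\Omega(\tau\sqrt m)=\Omega(\kappa)$. The support-function growth inequality $f(\y)-f^\ast\ge c''\norm{\p}\norm{\y-\y^\ast}^2$ on the sphere then shows that no common output is $\eps_0/\sqrt d$-accurate for both functions, once the constants are tuned, giving $\eps_0=10^{-7}$.
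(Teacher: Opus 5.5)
Your outline follows the paper's proof step for step: the same max-affine family with $\tau=\Theta(m^{-1/2})$, the rectangular product invariant with $1/(4m)$-quantile truncations, Urysohn plus a uniformly random ambient direction, near-uniform barycentric weights, and quadratic growth. The one real geometric estimate in the resisting oracle, however, is justified by a mechanism that is not available.

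Under your own rule (each halfspace removes at most a $1/(4m)$ fraction), the level at which the active row is pinned is not free. The returned value $y$ must satisfy $y\ge r:=\max_j r_j$, where $r_j$ is the upper $\alpha$-quantile, $\alpha=1/(4m)$, of $\ell_j(\w)=\tfrac12 x_{t,j}+\ip{\w}{\z_t}$ on $P_j$. Otherwise some row $j$ loses more than an $\alpha$ fraction under $\{\ell_j\le y\}$, or becomes empty if $y<\min_{P_j}\ell_j$. For the pinned row $i$, which has $r_i=r$, this puts the section inside its top $\alpha$-cap. By Gr\"unbaum's inequality, the cap above the centroid level of $\ell_i$ has volume fraction at least $1/e>\alpha$, so the centroid level lies strictly below $r$. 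A section ``through a near-centroid'' is therefore never admissible, and central-section bounds say nothing about the sections you may actually take. The admissible section also need not be comparable to $V_i/\mathrm{width}$: for a triangle with apex at the top, the section at the $\alpha$-quantile is shorter than the average section by the factor $2\sqrt\alpha=m^{-1/2}$.

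The correct estimate loses a factor $\alpha$, and that loss is affordable. Let $u:=\max_{P_i}\ell_i$ and let $s$ be the norm of the gradient of $\ell_i$ projected onto the direction space of $P_i$. The coarea formula gives $\alpha V_i=s^{-1}\int_r^u\intrvol{k_i-1}(P_i\cap\{\ell_i=y\})\,dy$, and $u-r\le2\tau s$ because $P_i\subseteq\tau B_m$. Hence some $y\in[r,u]$ has section volume at least $\alpha V_i/(2\tau)=V_i/(8m\tau)$. The adversary returns that $y$, which only helps the other rows; Brunn's concavity principle gives the same bound at $y=r$ itself. Normalize by $\kappa_k\tau^k$, with $\kappa_k$ the volume of the unit $k$-ball, and use $\kappa_k/\kappa_{k-1}\ge k^{-1/2}$. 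Each pinning then costs at most $\log(8m^{3/2})$ in $\sum_i\log(1/\rho_i)$, where $\rho_i=V_i/(\kappa_{k_i}\tau^{k_i})$. This is the $O(\log m)$ you budgeted, so the rest of your accounting survives. It is cleaner to track this product-volume potential and convert to volume radius only at the end, for rows with $k_i\ge(1-4\eta)m$.

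Smaller repairs:
\begin{enumerate}[leftmargin=22pt]
\item $v$ should be the \emph{smallest} admissible level, $\max_j r_j$, not the largest.
\item The log-volume budget is $O(\eta m^2)$ for a small constant $\eta$, not $o(m^2)$; Markov's inequality uses the smallness of $\eta$.
\item The barycentric step you flag as the main obstacle is exact via KKT. First show every $\lambda_i>0$. Then $\lambda_i=1/m+\ip{\mu-\w_i}{\z}/a^2$ and $\z=(I+\tfrac{m}{a^2}\Sigma)^{-1}\mu$, where $\mu,\Sigma$ are the row mean and covariance. So $\norm{\z}\le\tau$ and $\abs{\lambda_i-1/m}\le2\tau^2/a^2$.
\item $\eps_0=10^{-7}$ follows only after carrying the constants: aggregate width $m\tau/4$, $\Gamma=100$, and minimizer separation above $1/600$.
\end{enumerate}
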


\paragraph{Known upper bound.}
Protasov~\cite{Protasov1996} proves that, for a continuous convex function on a known $d$-dimensional convex body, deterministic access to exact function values suffices to obtain relative objective error $\delta\in(0,1/2)$ using
\[
 O\!\left(d^2\log(d+1)\log\frac1\delta\right)
\]
evaluations.  Since every $f\in\cF_d(R,L)$ has objective range at most $2LR$, his result gives a universal constant $C_1>0$ such that, for $0<\eps\leq LR$,
\begin{equation}\label{eq:protasov-upper}
 Q_{\val}^{\det}(d,R,L,\eps)
 \leq
 C_1d^2\log(d+1)\log\!\left(\frac{4LR}{\eps}\right).
\end{equation}
Theorem~\ref{thm:main-intro} follows from Theorem~\ref{thm:lower} and~\eqref{eq:protasov-upper} with $R=L=1$.

\begin{corollary}[Polynomially small accuracy]\label{cor:polynomial-accuracy}
For every fixed $\alpha>1/2$, uniformly over $R,L>0$,
\[
 Q_{\val}^{\det}(d,R,L,LRd^{-\alpha})
 =\widetildeTheta(d^2),
\]
where the hidden constants and logarithmic powers may depend on $\alpha$ but not on $d$, $R$, or $L$.  At $\alpha=1/2$, the same conclusion holds for accuracy $\beta LR/\sqrt d$ whenever $0<\beta\leq\eps_0$ is fixed.
\end{corollary}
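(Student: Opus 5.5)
The corollary combines the scaling Lemma~\ref{lem:scaling} with the two bounds already available, Theorem~\ref{thm:lower} and Protasov's bound~\eqref{eq:protasov-upper}, plus monotonicity of $Q_{\val}^{\det}$ in the accuracy. By Lemma~\ref{lem:scaling}, $Q_{\val}^{\det}(d,R,L,LRd^{-\alpha})=Q_{\val}^{\det}(d,d^{-\alpha})$. Hence both the dependence on $R,L$ and the required uniformity disappear at once, and it suffices to bound $Q_{\val}^{\det}(d,d^{-\alpha})$ above and below by $d^2$ times powers of $\log(d+1)$, with constants depending only on $\alpha$.

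\emph{Upper bound.} Apply~\eqref{eq:protasov-upper} with $R=L=1$ and $\eps=d^{-\alpha}\le1$ (valid for $d\ge1$). This gives
\[
 Q_{\val}^{\det}(d,d^{-\alpha})\le C_1d^2\log(d+1)\log(4d^{\alpha}).
\]
Since $\log(4d^\alpha)\le C_\alpha\log(d+1)$ for $d\ge2$, we get $O_\alpha(d^2\log^2(d+1))$. For $\alpha=1/2$ with accuracy $\beta/\sqrt d$, the bound becomes $C_1d^2\log(d+1)\log(4\sqrt d/\beta)$, which is again $O_\beta(d^2\log^2(d+1))$ since $\beta$ is fixed.

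\emph{Lower bound.} A smaller target accuracy can only make the problem harder: any algorithm guaranteeing error $\eps'$ also guarantees every $\eps\ge\eps'$. So $Q_{\val}^{\det}(d,\eps)$ is nonincreasing in $\eps$. For $\alpha>1/2$, take $d$ large enough that $d^{-\alpha}\le\eps_0/\sqrt d$, i.e. $d^{\alpha-1/2}\ge1/\eps_0$. Monotonicity and Theorem~\ref{thm:lower} then give
\[
 Q_{\val}^{\det}(d,d^{-\alpha})\ge c\,d^2/\log(d+1)
\]
for all $d\ge d_\alpha:=\max\{d_0,\lceil\eps_0^{-1/(\alpha-1/2)}\rceil\}$. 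The case $\alpha=1/2$, $0<\beta\le\eps_0$, is immediate because $\beta/\sqrt d\le\eps_0/\sqrt d$.

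The only delicate point is the meaning of $\widetildeTheta$ for small $d$. Below $d_\alpha$ there are finitely many dimensions, and on them $Q_{\val}^{\det}$ is at least $1$ and bounded above by the Protasov bound. Absorbing these cases into the constants therefore yields a two-sided estimate valid for all $d\ge2$, with constants depending only on $\alpha$ (or $\beta$). No genuine obstacle is expected: every step is a direct substitution into previously stated results.
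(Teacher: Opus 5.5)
Your proposal is correct and follows essentially the same route as the paper: reduce to $R=L=1$ by Lemma~\ref{lem:scaling}, obtain the upper bound from~\eqref{eq:protasov-upper}, and obtain the lower bound from monotonicity in $\eps$ together with Theorem~\ref{thm:lower} once $d^{-\alpha}\le\eps_0/\sqrt d$. Compared with the paper, you additionally make the threshold $d_\alpha$ explicit and absorb the finitely many small dimensions into the constants.
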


\begin{proof}
By Lemma~\ref{lem:scaling}, it suffices to work with $R=L=1$.  For $\alpha>1/2$, one has $d^{-\alpha}\leq\eps_0d^{-1/2}$ for all sufficiently large $d$, so monotonicity and Theorem~\ref{thm:lower} give the lower bound.  Equation~\eqref{eq:protasov-upper} gives the upper bound $O_\alpha(d^2\log^2(d+1))$.  The case $\alpha=1/2$ is identical when $\beta\leq\eps_0$.
\end{proof}

\section{Proof of Theorem~\ref*{thm:lower}}\label{sec:lower-proof}
This section is devoted to the proof of the claimed lower-bound of Theorem \ref{thm:lower}. We organize this proof into four subsections, following the presentation of \ref{subsec:overview}. 

It suffices to prove the theorem in even dimension.  To see this, let $d$ be odd and set $d':=d-1$.  For $f\in\cF_{d'}$, define $F:B_d\to\R$ by $F(x,t):=f(x)$.  A deterministic algorithm in dimension $d$ can then be simulated in dimension $d'$ by projecting every query $(x,t)$ to $x$ and projecting its final output in the same way.  Thus a lower bound in dimension $d'$ transfers to dimension $d$.  Since $(d')^2/\log(d'+1)=\Omega(d^2/\log(d+1))$ and $\eps_0/\sqrt d\leq\eps_0/\sqrt{d'}$, only the universal lower-bound constant changes.  Hence throughout this paper, $d$ is even and
\[
 m:=\frac d2.
\]

\subsection{The hard function class and its solutions}\label{subsec:hard-family}

Assume $d\geq4$ and write points $\q\in\R^d$ as
\[
 \q=(\x,\z)\in\R^m\times\R^m.
\]
Let
\[
 a:=\frac12,
 \qquad
 \Gamma:=100,
 \qquad
 \tau:=\frac{a}{\Gamma\sqrt m}.
\]
For a matrix $W$ with rows $\w_1,\ldots,\w_m\in\tau B_m$, define
\begin{equation}\label{eq:hard-function}
 f_W(\x,\z)
 :=\max_{i\in[m]}\{a x_i+\ip{\w_i}{\z}\}.
\end{equation}
We denote this max-affine family by
\begin{equation}\label{eq:hard-family}
 \cH_d:=\{f_W:W\in(\tau B_m)^m\}.
\end{equation}
Every member of $\cH_d$ is the maximum of $m=d/2$ linear functions.

\begin{lemma}[Membership of the hard family]\label{lem:hard-in-class}
For every $W\in(\tau B_m)^m$, the function $f_W$ belongs to $\cF_d$.
\end{lemma}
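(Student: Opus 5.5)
The plan is to check the three defining properties of $\cF_d=\cF_d(1,1)$ directly for $f_W$ restricted to $B_d$.

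\emph{Convexity and normalization.} Each map $(\x,\z)\mapsto a x_i+\ip{\w_i}{\z}$ is linear on $\R^d$. Hence $f_W$, as a finite pointwise maximum of linear functions, is convex on $\R^d$ and so on $B_d$. Every linear piece vanishes at $\zero$, so $f_W(\zero)=\max_i 0=0$.

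\emph{Lipschitz constant.} The $i$-th piece is $\ip{\p_i}{\q}$ with $\p_i:=(a\e_i,\w_i)\in\R^m\times\R^m$. For $\q,\q'\in B_d$, choose an index $i$ attaining the maximum at $\q$. Then
\[
 f_W(\q)-f_W(\q')\leq\ip{\p_i}{\q}-\ip{\p_i}{\q'}\leq\norm{\p_i}\,\norm{\q-\q'},
\]
and the symmetric argument bounds $f_W(\q')-f_W(\q)$. So the Lipschitz constant is at most $\max_i\norm{\p_i}$. We compute
\[
 \norm{\p_i}^2=a^2+\norm{\w_i}^2\leq a^2+\tau^2=\frac14\Bigl(1+\frac{1}{\Gamma^2 m}\Bigr)\leq\frac14\cdot 1.0001<1,
\]
so $\norm{\p_i}<1$ and $f_W$ is $1$-Lipschitz.

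No step is a real obstacle. The only point needing care is the Lipschitz estimate for a maximum of affine functions, which uses the standard choice of the active index at one of the two points, together with the numerical check that $a^2+\tau^2\le 1$ given $a=\tfrac12$, $\Gamma=100$, $m\ge 2$.
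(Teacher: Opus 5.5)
Your proposal is correct and follows essentially the same route as the paper's proof: convexity as a maximum of linear functions, $f_W(\zero)=0$, and the Lipschitz bound from the slope norms $\norm{(a\e_i,\w_i)}^2\leq a^2+\tau^2<1$. The only difference is that you write out the active-index argument for the Lipschitz estimate of a max-affine function, which the paper takes as standard.
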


\begin{proof}
The function is a maximum of linear functions and is therefore convex.  Its slopes are
\[
 \vve_i=(a\e_i,\w_i),
 \qquad i\in[m],
\]
where $\e_i$ is the $i$th standard basis vector in $\R^m$.  Since
\[
 \norm{\vve_i}^2
 =a^2+\norm{\w_i}^2
 \leq a^2+\tau^2<1,
\]
$f_W$ is $1$-Lipschitz on $\R^d$, and hence on $B_d$.  Finally, $f_W(\zero)=0$.
\end{proof}\vspace{2em}

The optimizer has a useful support-function description.  For $W\in(\tau B_m)^m$, set
\[
 K_W:=\conv\{\vve_1,\ldots,\vve_m\}
\]
and let $\p_W$ be the Euclidean projection of the origin onto $K_W$.

\begin{lemma}[Support-function geometry]\label{lem:support-geometry}
For every $W\in(\tau B_m)^m$, one has $\p_W\neq\zero$.  The unique minimizer of $f_W$ over $B_d$ is
\begin{equation}\label{eq:qstar}
 \q_W^*:=-\frac{\p_W}{\norm{\p_W}},
\end{equation}
and
\begin{equation}\label{eq:min-value}
 \min_{\q\in B_d}f_W(\q)=-\norm{\p_W}.
\end{equation}
Moreover, for every $\q\in B_d$,
\begin{equation}\label{eq:quadratic-growth}
 f_W(\q)-f_W(\q_W^*)
 \geq
 \frac{\norm{\p_W}}2\norm{\q-\q_W^*}^2.
\end{equation}
\end{lemma}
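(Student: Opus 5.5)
The plan is to view $f_W$ as the support function of $K_W$, namely $f_W(\q)=\max_{\vve\in K_W}\ip{\vve}{\q}$, since a linear function attains its maximum over a polytope at a vertex. The three claims then follow from minimax duality together with the variational inequality characterizing $\p_W$.

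\textbf{Step 1: $\p_W\neq\zero$.} Every $\vve\in K_W$ has the form $\vve=\sum_i\lambda_i\vve_i$ with $\lambda$ in the simplex. Its $\x$-block is $a\lambda$, so
\[
\norm{\vve}\ge a\norm{\lambda}\ge a/\sqrt m>0 .
\]
Hence $\zero\notin K_W$, and $\norm{\p_W}\ge a/\sqrt m$. This bound is not needed here but will presumably be used later.

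\textbf{Step 2: value and minimizer.} Projection onto $K_W$ is characterized by
\[
\ip{\vve-\p_W}{\p_W}\ge0\quad\text{for all }\vve\in K_W,
\]
that is, $\ip{\vve}{\p_W}\ge\norm{\p_W}^2$. Substituting $\q^*_W=-\p_W/\norm{\p_W}$ gives
\[
f_W(\q^*_W)=\max_{\vve}\ip{\vve}{\q^*_W}\le-\norm{\p_W},
\]
with equality at $\vve=\p_W$. For the matching lower bound, any $\q\in B_d$ satisfies
\[
f_W(\q)\ge\ip{\p_W}{\q}\ge-\norm{\p_W}
\]
by Cauchy--Schwarz. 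This proves~\eqref{eq:min-value}. Uniqueness will follow from~\eqref{eq:quadratic-growth}, since the right-hand side is positive for $\q\neq\q^*_W$.

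\textbf{Step 3: quadratic growth.} This is the only step needing a small trick: the bound comes from the curvature of the ball, not of $f_W$. Let $\q\in B_d$ and put $\mathbf u=\q^*_W$, a unit vector. Then
\[
f_W(\q)\ge\ip{\p_W}{\q}=-\norm{\p_W}\ip{\mathbf u}{\q}.
\]
Therefore
\[
f_W(\q)-f_W(\q^*_W)\ge\norm{\p_W}\bigl(1-\ip{\mathbf u}{\q}\bigr).
\]
Using $\norm{\q}\le1$ and $\norm{\mathbf u}=1$,
\[
\norm{\q-\mathbf u}^2=\norm{\q}^2-2\ip{\mathbf u}{\q}+1\le 2\bigl(1-\ip{\mathbf u}{\q}\bigr),
\]
which gives $1-\ip{\mathbf u}{\q}\ge\tfrac12\norm{\q-\mathbf u}^2$. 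Combining the two displays yields~\eqref{eq:quadratic-growth}, and hence uniqueness.

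No step is a real obstacle. The one point to check carefully is the direction of the projection inequality in Step 2.
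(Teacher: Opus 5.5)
Your proposal is correct and follows essentially the same route as the paper: the support-function representation over $K_W$, the bound $\norm{\vve}\geq a\norm{\lambda}\geq a/\sqrt m$ for $\p_W\neq\zero$, the projection variational inequality for the value, and the ball-curvature estimate $\norm{\q-\q_W^*}^2\leq2\bigl(1-\ip{\q}{\q_W^*}\bigr)$ for growth. The only difference is that you deduce uniqueness from \eqref{eq:quadratic-growth} together with $\norm{\p_W}>0$, while the paper uses the equality case of Cauchy--Schwarz; both arguments are valid.
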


\begin{proof}
Equation~\eqref{eq:hard-function} is the support function of $K_W$:
\[
 f_W(\q)=\max_{\vve\in K_W}\ip{\vve}{\q}.
\]
Every $\vve\in K_W$ has the form
\[
 \vve=\left(a\lambda,\sum_{i=1}^m\lambda_i\w_i\right)
 \quad\text{for some}\quad
 \lambda\in\Delta_m:=\{\lambda\in\R_+^m:\one^\top\lambda=1\}.
\]
Since $\norm{\lambda}\geq1/\sqrt m$, every point of $K_W$ has norm at least $a/\sqrt m$; in particular, $\p_W\neq\zero$.

The variational characterization of Euclidean projection gives
\[
 \ip{\p_W}{\vve-\p_W}\geq0
 \qquad\text{for every }\vve\in K_W.
\]
Thus, for $\q_W^*=-\p_W/\norm{\p_W}$,
\[
 \ip{\vve}{\q_W^*}\leq-\norm{\p_W}
 \qquad(\vve\in K_W),
\]
with equality at $\vve=\p_W$.  This proves~\eqref{eq:min-value}.  Conversely, for every $\q\in B_d$,
\[
 f_W(\q)\geq\ip{\p_W}{\q}\geq-\norm{\p_W}.
\]
Equality in the second inequality, with $\norm{\q}\leq1$, forces $\q=-\p_W/\norm{\p_W}$, proving uniqueness.

Finally,
\begin{align*}
 f_W(\q)-f_W(\q_W^*)
 &\geq \ip{\p_W}{\q}+\norm{\p_W}\\
 &=\norm{\p_W}\bigl(1-\ip{\q}{\q_W^*}\bigr).
\end{align*}
Because $\norm{\q}\leq1$ and $\norm{\q_W^*}=1$,
\[
 \norm{\q-\q_W^*}^2
 =\norm{\q}^2+1-2\ip{\q}{\q_W^*}
 \leq2\bigl(1-\ip{\q}{\q_W^*}\bigr),
\]
which proves~\eqref{eq:quadratic-growth}.
\end{proof}\vspace{2em}

We next show that the closest point is controlled by an almost-uniform average of the rows.  This stability is what later converts aggregate row width into minimizer separation.

\begin{lemma}[Barycentric structure of the projection]\label{lem:barycentric}
For $W\in(\tau B_m)^m$, there is a unique $\lambda(W)\in\Delta_m$ such that
\[
 \p_W=(a\lambda(W),\z_W),
 \qquad
 \z_W:=\sum_{i=1}^m\lambda_i(W)\w_i.
\]
Every coordinate of $\lambda(W)$ is positive.  If
\[
 \mu_W:=\frac1m\sum_{i=1}^m\w_i,
 \qquad
 \Sigma_W:=\frac1m\sum_{i=1}^m
 (\w_i-\mu_W)(\w_i-\mu_W)^\top,
\]
then
\begin{align}
 \lambda_i(W)
 &=\frac1m+
 \frac{\ip{\mu_W-\w_i}{\z_W}}{a^2},
 \label{eq:lambda-formula}\\
 \z_W
 &=\left(I+\frac m{a^2}\Sigma_W\right)^{-1}\mu_W.
 \label{eq:z-formula}
\end{align}
In particular,
\begin{equation}\label{eq:lambda-close}
 \abs{\lambda_i(W)-\frac1m}
 \leq\frac{2}{\Gamma^2m}
 \qquad(i\in[m]),
\end{equation}
and
\begin{equation}\label{eq:p-norm-range}
 \frac a{\sqrt m}
 \leq\norm{\p_W}
 \leq
 \frac a{\sqrt m}\sqrt{1+\Gamma^{-2}}.
\end{equation}
\end{lemma}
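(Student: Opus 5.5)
We view the projection as a convex quadratic program over the simplex.  Since $\vve\mapsto(\lambda\mapsto\vve)$ is the affine map $\lambda\mapsto(a\lambda,W^\top\lambda)$ with first block $a\lambda$ injective, the function
\[
 \phi(\lambda):=\tfrac12\norm{(a\lambda,W^\top\lambda)}^2=\tfrac{a^2}{2}\norm{\lambda}^2+\tfrac12\Bigl\lVert\sum_i\lambda_i\w_i\Bigr\rVert_2^2
\]
is strongly convex on $\Delta_m$.  Hence it has a unique minimizer $\lambda(W)$, and $\p_W=(a\lambda(W),\z_W)$ follows.  Uniqueness of $\lambda$ follows from injectivity: $\p_W$ is unique, and its first block determines $\lambda$.

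\textbf{KKT conditions and positivity.}  The KKT conditions give $a^2\lambda_i+\ip{\w_i}{\z_W}=\nu-\eta_i$, with $\eta_i\ge0$ and $\eta_i\lambda_i=0$.  To show positivity, first assume all coordinates are positive, i.e.\ all $\eta_i=0$.  Averaging over $i$ gives $\nu=a^2/m+\ip{\mu_W}{\z_W}$, which yields \eqref{eq:lambda-formula}.  With this formula in hand, $\abs{\ip{\mu_W-\w_i}{\z_W}}\le 2\tau\cdot\tau=2\tau^2$ (since $\norm{\z_W}\le\tau$ as a convex combination), and $2\tau^2/a^2=2/(\Gamma^2m)<1/m$.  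So the formula yields positive values.

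To make this rigorous rather than circular, I will argue as follows.  Define $\lambda^\circ$ via the linear system obtained by setting all $\eta_i=0$, namely $a^2\lambda+W\z=\nu\one$, $\z=W^\top\lambda$, $\one^\top\lambda=1$.  Show that it has a solution in which every coordinate is positive, by the bound above.  Then $\lambda^\circ$ satisfies KKT with $\eta=0$, and by convexity it is the minimizer.

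\textbf{Solving for $\z_W$.}  Substituting $\lambda_i$ into $\z=\sum_i\lambda_i\w_i$ gives
\[
 \z=\mu_W+\tfrac1{a^2}\sum_i\ip{\mu_W-\w_i}{\z}\w_i=\mu_W-\tfrac m{a^2}\Sigma_W\z.
\]
Here we use $\sum_i(\mu_W-\w_i)=\zero$, which lets us replace $\w_i$ by $\w_i-\mu_W$.  Rearranging gives \eqref{eq:z-formula}, and $I+\tfrac m{a^2}\Sigma_W$ is positive definite, hence invertible.

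\textbf{Existence of the linear-system solution.}  This step is the main obstacle.  I would define $\z^\circ$ by \eqref{eq:z-formula} and $\lambda^\circ$ by \eqref{eq:lambda-formula}.  One checks that $\one^\top\lambda^\circ=1$, and reversing the computation above gives $W^\top\lambda^\circ=\z^\circ$.  Since $(I+\tfrac m{a^2}\Sigma_W)^{-1}$ is a contraction, $\norm{\z^\circ}\le\norm{\mu_W}\le\tau$.  Therefore $\abs{\lambda_i^\circ-1/m}\le 2\tau^2/a^2=2/(\Gamma^2m)$.  This gives positivity, \eqref{eq:lambda-close}, and the KKT conditions, completing the argument above.

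\textbf{Norm bounds.}  For \eqref{eq:p-norm-range}, the lower bound is already in Lemma~\ref{lem:support-geometry}.  For the upper bound, $\norm{\p_W}^2\le\norm{(a\one/m,\mu_W)}^2$ by minimality over $K_W$, which gives
\[
 \norm{\p_W}^2\le a^2/m+\tau^2=\tfrac{a^2}{m}(1+\Gamma^{-2}).
\]
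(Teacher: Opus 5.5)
Your proof is correct, but you reach positivity and the closed forms by a different route from the paper.

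\textbf{The paper's route (necessity at the minimizer).} The paper works at the actual minimizer. Suppose some $\lambda_i=0$, and pick $j$ with $\lambda_j\ge 1/m$. The half-partial derivatives $g_k=a^2\lambda_k+\ip{\w_k}{\z_W}$ then satisfy $g_i\le\tau^2<a^2/m-\tau^2\le g_j$. Shifting mass from $j$ to $i$ therefore strictly decreases $\Phi$, a contradiction. Only once positivity is known does the paper read off \eqref{eq:lambda-formula} and \eqref{eq:z-formula} from the interior Lagrange equations, and then deduce \eqref{eq:lambda-close} from $\norm{\z_W}\le\norm{\mu_W}$.

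\textbf{Your route (sufficiency).} You build the candidate $(\lambda^\circ,\z^\circ)$ directly from the closed forms and check four things:
\begin{enumerate}[label=(\roman*),leftmargin=22pt]
\item $\one^\top\lambda^\circ=1$;
\item $\sum_i\lambda^\circ_i\w_i=\z^\circ$, by the reverse substitution;
\item $a^2\lambda^\circ_i+\ip{\w_i}{\z^\circ}$ does not depend on $i$;
\item every $\lambda^\circ_i>0$, from the contraction bound $\norm{\z^\circ}\le\norm{\mu_W}\le\tau$.
\end{enumerate}
In (iv) you rightly avoid the convex-combination bound, which would be circular at that point. The first-order inequality for the convex $\phi$ then shows $\lambda^\circ$ is a minimizer on $\Delta_m$, and strong convexity gives $\lambda^\circ=\lambda(W)$.

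\textbf{Comparison.} In your argument, positivity is a corollary of \eqref{eq:lambda-close} rather than a prerequisite for it. Your route also proves the slightly stronger fact that the minimizer of $\phi$ over the whole hyperplane $\{\one^\top\lambda=1\}$ already lies in the open simplex, and it needs no boundary case analysis. The paper's exchange argument, in turn, avoids the reverse-substitution check and any appeal to sufficiency of first-order conditions. Both routes rest on exactly the same numerical condition, $\Gamma^2>2$.

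\textbf{One small slip.} With the constraint $\lambda\ge 0$, the multiplier should enter as $a^2\lambda_i+\ip{\w_i}{\z_W}=\nu+\eta_i$, not $\nu-\eta_i$, since an inactive coordinate must have partial derivative at least $\nu$. You only use the case $\eta=0$ and then verify optimality directly, so this does not affect the proof.
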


\begin{proof}
The barycentric vector $\lambda(W)$ is the unique minimizer over $\Delta_m$ of the strictly convex function
\begin{equation}\label{eq:lambda-objective}
 \Phi(\lambda)
 :=a^2\norm{\lambda}^2+
 \norm{\sum_{i=1}^m\lambda_i\w_i}^2.
\end{equation}
Uniqueness also follows directly from the first block $a\lambda$.

Let $\lambda$ be the minimizer and put $\z_W=\sum_i\lambda_i\w_i$.  Define
\[
 g_i:=a^2\lambda_i+\ip{\w_i}{\z_W},
\]
so that the $i$th coordinate of $\nabla\Phi(\lambda)$ is $2g_i$.  Suppose that $\lambda_i=0$ for some $i$.  Choose $j$ with $\lambda_j\geq1/m$.  Since $\norm{\z_W}\leq\tau$,
\[
 g_i\leq\tau^2,
 \qquad
 g_j\geq\frac{a^2}{m}-\tau^2>\tau^2,
\]
where the strict inequality uses $\tau^2=a^2/(\Gamma^2m)$ and $\Gamma^2>2$.  For all sufficiently small $h>0$, the vector $\lambda+h(\e_i-\e_j)$ remains in $\Delta_m$, and the directional derivative of $\Phi$ at $h=0$ equals $2(g_i-g_j)<0$.  This contradicts optimality.  Hence every $\lambda_i$ is positive.

The interior Lagrange multiplier equations give a scalar $\beta$ such that
\[
 a^2\lambda_i+\ip{\w_i}{\z_W}=\beta
 \qquad(i\in[m]).
\]
Averaging over $i$ gives $\beta=a^2/m+\ip{\mu_W}{\z_W}$, and subtraction yields~\eqref{eq:lambda-formula}.  Substituting this formula into $\z_W=\sum_i\lambda_i\w_i$ gives
\begin{align*}
 \z_W
 &=\mu_W+\frac1{a^2}
   \sum_{i=1}^m\w_i(\mu_W-\w_i)^\top\z_W\\
 &=\mu_W-\frac m{a^2}\Sigma_W\z_W,
\end{align*}
because
\[
 \sum_{i=1}^m\w_i(\mu_W-\w_i)^\top=-m\Sigma_W.
\]
Rearranging proves~\eqref{eq:z-formula}.

The matrix $I+(m/a^2)\Sigma_W$ is positive definite and its inverse has operator norm at most one.  Therefore
\[
 \norm{\z_W}\leq\norm{\mu_W}\leq\tau.
\]
Together with $\norm{\mu_W-\w_i}\leq2\tau$, equation~\eqref{eq:lambda-formula} gives
\[
 \abs{\lambda_i-\frac1m}
 \leq\frac{2\tau^2}{a^2}
 =\frac{2}{\Gamma^2m},
\]
which is~\eqref{eq:lambda-close}.

For the lower norm bound, $\norm{\p_W}\geq a\norm{\lambda(W)}\geq a/\sqrt m$.  For the upper bound, evaluate~\eqref{eq:lambda-objective} at $\one/m$:
\[
 \norm{\p_W}^2
 \leq\frac{a^2}{m}+\norm{\mu_W}^2
 \leq\frac{a^2}{m}(1+\Gamma^{-2}).
\]
This proves~\eqref{eq:p-norm-range}.
\end{proof}

\subsection{Construction of adversarial oracle responses}\label{subsec:resisting}

Fix an arbitrary deterministic algorithm.  We construct each oracle response from the transcript preceding the current query and later show that, after $\widetilde{\Omega}(d^2)$ queries, two functions in the hard family remain indistinguishable.  After query $t$, the adversary maintains one row uncertainty set for each affine piece,
\[
 P_i^t\subseteq\tau B_m,
 \qquad i\in[m].
\]
Intuitively, $P_i^t$ contains the row vectors that remain possible for the $i$th affine piece after the first $t$ responses.  The crucial invariant is rectangular: every independent selection of one row from each $P_i^t$ defines a fixed function in the hard family that agrees with the entire transcript so far.  Initially no information about any row has been revealed, so $P_i^0=\tau B_m$ for every $i$.

If $P\subseteq\R^m$ is a nonempty compact convex set, we write $\dim P$ to denote its affine dimension.  On every $k$-dimensional affine subspace we use the $k$-dimensional Lebesgue measure induced by the Euclidean metric, and denote it by $\intrvol{k}$.  We set $\kappa_k:=\intrvol{k}(B_k)$ for $k\geq1$, and use the conventions $\kappa_0=1$ and $\intrvol{0}(\{p\})=1$.

We first isolate an elementary slicing fact used by the oracle.

\begin{lemma}[Quantiles and large affine sections]\label{lem:quantile-section}
Let $P\subseteq\tau B_m$ be a compact convex set of affine dimension $k\geq1$ and positive intrinsic volume $V:=\intrvol{k}(P)$.  Let $\ell$ be an affine functional that is nonconstant on $P$, and let $\alpha\in(0,1)$.  Then there is a level $r$ such that
\begin{equation}\label{eq:abstract-quantile}
 \intrvol{k}(P\cap\{\ell\geq r\})=\alpha V.
\end{equation}
If $u:=\max_P\ell$, then $r<u$, and some $y\in[r,u]$ satisfies
\begin{equation}\label{eq:abstract-large-section}
 \intrvol{k-1}(P\cap\{\ell=y\})
 \geq\frac{\alpha V}{2\tau}.
\end{equation}
\end{lemma}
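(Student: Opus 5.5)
We prove both claims by studying the upper-tail volume function
\[
 \phi(s):=\intrvol{k}(P\cap\{\ell\geq s\}),
\]
where $P$ sits inside its $k$-dimensional affine hull $A$. Let $v:=\min_P\ell$ and $u:=\max_P\ell$. Since $\ell$ is nonconstant on $P$, we have $v<u$.

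\textbf{Step 1: existence of the level $r$.} The map $\phi$ is nonincreasing. It satisfies $\phi(s)=V$ for $s\leq v$ and $\phi(s)=0$ for $s>u$. Also, $\phi(u)=0$, because $P\cap\{\ell=u\}$ lies in a hyperplane of $A$ and so has $k$-dimensional measure zero. Here we use that $\ell$ is nonconstant on $A$, so its level sets in $A$ are $(k-1)$-dimensional.

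To see that $\phi$ is continuous, note that every level set $\{\ell=s\}\cap A$ is a hyperplane of $A$ and hence $k$-null. Continuity then follows from monotone and dominated convergence applied to the indicators $\one\{\ell\geq s\}$ on $P$.

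By the intermediate value theorem, some $r\in[v,u]$ satisfies $\phi(r)=\alpha V$. Since $\alpha V>0=\phi(u)$, we get $r<u$.

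\textbf{Step 2: the large section, via Fubini.} Write $\ell|_A(x)=\ip{g}{x-x_0}+c$, where $g\neq\zero$ is the gradient of $\ell$ within the direction space of $A$. Fubini's theorem (coarea along $g$) gives
\[
 \alpha V=\intrvol{k}(P\cap\{\ell\geq r\})=\frac{1}{\norm{g}}\int_r^u \intrvol{k-1}(P\cap\{\ell=y\})\,dy.
\]
Choose $y\in[r,u]$ maximizing the section volume; a measurable near-maximizer also suffices, and section volumes of a convex body are continuous on the open range by Brunn--Minkowski. Then
\[
 \alpha V\leq \frac{u-r}{\norm{g}}\,\intrvol{k-1}(P\cap\{\ell=y\}).
\]
The quantity $(u-r)/\norm{g}$ is the Euclidean distance within $A$ between the two parallel hyperplanes $\{\ell=r\}$ and $\{\ell=u\}$. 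Both hyperplanes meet $P\subseteq\tau B_m$: the first because $r\geq v$ and $P$ is convex, the second at a maximizer of $\ell$. Hence this distance is at most $\operatorname{diam}(P)\leq 2\tau$. Rearranging gives \eqref{eq:abstract-large-section}.

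\textbf{Edge cases and the main obstacle.} When $k=1$, the sections are single points, each of measure $1$ under the stated convention. The bound then reads $1\geq \alpha V/(2\tau)$, which holds because $V\leq 2\tau$ for a segment in $\tau B_m$. The Fubini computation still applies with counting measure, so no separate argument is needed.

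If one prefers to avoid selecting a maximizer, note that the integrand cannot lie below $\alpha V\norm{g}/(u-r)$ almost everywhere on $[r,u]$. Some $y$ with the required bound therefore exists, and this suffices.

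The only point needing care is measure-theoretic: continuity of $\phi$, and the normalization of the Fubini slicing in the intrinsic metric of $A$, so that the factor $1/\norm{g}$ and the distance bound $2\tau$ match. Both are routine.
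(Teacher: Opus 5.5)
Your proof is correct and follows essentially the same route as the paper. Both arguments use continuity of the cap-volume function and the intermediate value theorem to get the quantile; both then apply Fubini along the gradient of $\ell$ within $\aff(P)$, bound $(u-r)/\norm{g}\leq 2\tau$ (the paper writes this as $\abs{\ell(p)-\ell(q)}\leq 2\tau s$), and finish with an averaging step. Your ``integrand cannot lie below the threshold almost everywhere'' fallback is exactly the paper's last step, and it is the version to keep: continuity of section volumes on the open range $(v,u)$ does not by itself give a maximizer on the closed interval $[r,u]$.
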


\begin{proof}
Let $A:=\aff(P)$ and $L:=\lin(P-P)$.  Writing the linear part of $\ell$ as $b$, set $s:=\norm{\proj_L b}$.  Nonconstancy gives $s>0$.  The cap-volume function
\[
 h(c):=\intrvol{k}(P\cap\{\ell\geq c\})
\]
is continuous: after identifying $A$ isometrically with $\R^k$, every level set of the nonconstant affine functional has $k$-dimensional measure zero, so continuity follows from dominated convergence.  Moreover, $h(c)=V$ below $\min_P\ell$ and $h(c)=0$ at $c=u$.  The intermediate value theorem proves~\eqref{eq:abstract-quantile} and also $r<u$.

Choose orthonormal coordinates on $A$ whose first axis is $\proj_L b/s$.  Fubini's theorem, equivalently the coarea formula for this affine functional, gives
\begin{equation}\label{eq:coarea-general}
 \alpha V
 =\frac1s\int_r^u
 \intrvol{k-1}(P\cap\{\ell=y\})\,dy.
\end{equation}
For $p,q\in P$, the difference $p-q$ lies in $L$, and hence
\[
 \abs{\ell(p)-\ell(q)}
 \leq s\norm{p-q}
 \leq2\tau s.
\]
Thus $u-r\leq2\tau s$.  The average value of the integrand in~\eqref{eq:coarea-general} is therefore at least $\alpha V/(2\tau)$, proving~\eqref{eq:abstract-large-section}.
\end{proof}

\vspace{2em}
Suppose that the algorithm's $t$th query is
\[
 \q_t=(\x_t,\z_t)\in B_d.
\]
For each row define the affine functional
\begin{equation}\label{eq:row-functional}
 \ell_i(\w):=a x_{t,i}+\ip{\w}{\z_t}
 \qquad(\w\in P_i^{t-1}),
\end{equation}
and fix
\[
 \alpha:=\frac1{4m}.
\]
If $\ell_i$ is nonconstant on $P_i^{t-1}$, choose an upper $\alpha$-quantile $r_i$ as in~\eqref{eq:abstract-quantile}.  If $\ell_i$ is constant, let $r_i$ be its constant value.  Put
\begin{equation}\label{eq:max-quantile}
 r:=\max_{i\in[m]}r_i.
\end{equation}

\subsubsection{The update rule}\label{subsubsec:update}

We distinguish into two cases, informative versus uninformative updates, depending on whether the query reveals any new information about the true instance or not. 

\paragraph{Informative update.}
Suppose that some nonconstant row attains the maximum in~\eqref{eq:max-quantile}; choose the least such index $i$.  Apply Lemma~\ref{lem:quantile-section} to $P_i^{t-1}$ and $\ell_i$.  Since $\alpha=1/(4m)$, there is a level
\[
 y\in\left[r,\max_{\w\in P_i^{t-1}}\ell_i(\w)\right]
\]
for which
\begin{equation}\label{eq:large-section}
 \intrvol{k_i-1}(P_i^{t-1}\cap\{\ell_i=y\})
 \geq\frac{V_i}{8m\tau},
\end{equation}
where $k_i:=\dim P_i^{t-1}$ and $V_i:=\intrvol{k_i}(P_i^{t-1})$.  The adversary returns the exact value $y$ and sets
\begin{align}
 P_i^t&:=P_i^{t-1}\cap\{\ell_i=y\},
 \label{eq:selected-update}\\
 P_j^t&:=P_j^{t-1}\cap\{\ell_j\leq y\}
 \qquad(j\neq i).
 \label{eq:other-update}
\end{align}

\paragraph{Noninformative update.}
Suppose that every row attaining $r$ is constant.  The adversary returns $r$ and sets
\begin{equation}\label{eq:noninfo-update}
 P_j^t:=P_j^{t-1}\cap\{\ell_j\leq r\}
 \qquad(j\in[m]).
\end{equation}

The next proposition makes the exact-transcript invariant and the dimension and volume effects of these updates explicit.

\begin{proposition}[Exact product invariant and one-step bounds]\label{prop:product-invariant}
For every $t\geq0$, the sets $P_i^t$ are nonempty compact convex subsets of $\tau B_m$ with positive volume in their affine hulls.  They satisfy the simultaneous consistency invariant
\begin{equation}\label{eq:consistency-invariant}
 \text{every }W\in P_1^t\times\cdots\times P_m^t
 \text{ reproduces all responses through query }t\text{ exactly.}
\end{equation}
More precisely, write
\[
 k_j^-:=\dim P_j^{t-1},\quad
 V_j^-:=\intrvol{k_j^-}(P_j^{t-1}),\quad
 k_j^+:=\dim P_j^t,
\]
and let $V_j^+$ be the intrinsic volume of $P_j^t$ in its affine hull.

\begin{enumerate}[label=(\roman*),leftmargin=24pt]
\item In an informative update with selected row $i$,
\[
 k_i^+=k_i^--1,
 \qquad
 V_i^+\geq\frac{V_i^-}{8m\tau}.
\]
For every $j\neq i$,
\[
 k_j^+=k_j^-,
 \qquad
 V_j^+\geq(1-\alpha)V_j^-.
\]
If $\ell_j$ is constant, then in fact $P_j^t=P_j^{t-1}$.

\item In a noninformative update, for every $j$,
\[
 k_j^+=k_j^-,
 \qquad
 V_j^+\geq(1-\alpha)V_j^-.
\]
Every row on which $\ell_j$ is constant is unchanged.

\item If a query point has appeared previously, the response at its repetition is the same as before; the repeated query necessarily triggers a noninformative update and leaves every row set unchanged.
\end{enumerate}
\end{proposition}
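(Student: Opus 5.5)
We argue by induction on $t$. The base case $t=0$ is immediate: $P_i^0=\tau B_m$ is an $m$-dimensional ball of positive volume, and the transcript is empty.

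For the inductive step, assume the claims hold through $t-1$. We first prove consistency. Take any $W\in P_1^t\times\cdots\times P_m^t$. Since $P_j^t\subseteq P_j^{t-1}$, $W$ reproduces the first $t-1$ responses by~\eqref{eq:consistency-invariant}. It remains to check $f_W(\q_t)=\max_j\ell_j(\w_j)$ against the returned value.

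In an informative update, \eqref{eq:selected-update} gives $\ell_i(\w_i)=y$, and \eqref{eq:other-update} gives $\ell_j(\w_j)\le y$ for $j\ne i$. Hence $f_W(\q_t)=y$ exactly.

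In a noninformative update, every row satisfies $\ell_j(\w_j)\le r$ by~\eqref{eq:noninfo-update}. A row attaining $r$ is constant with value $r_j=r$, so it equals $r$ on all of $P_j^{t-1}$ and therefore on $P_j^t$. Thus $f_W(\q_t)=r$.

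Next we check nonemptiness, dimension and volume, row by row.

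\begin{itemize}
\item \emph{Selected row (informative case).} The set $P_i^t$ is a hyperplane section of $P_i^{t-1}$ by a level set of $\ell_i$, which is nonconstant there. Its dimension is therefore at most $k_i^--1$. The lower bound~\eqref{eq:large-section} gives positive $(k_i^--1)$-volume, so the dimension is exactly $k_i^--1$ with the claimed volume bound. One subtlety: if $k_i^-=1$, the section is a point, and the convention $\intrvol{0}=1$ still makes~\eqref{eq:large-section} meaningful.
\item \emph{Unselected row, constant $\ell_j$.} Here $\ell_j\equiv r_j\le r\le y$. In the noninformative case the threshold is $r$ itself, so $\{\ell_j\le\cdot\}$ contains all of $P_j^{t-1}$ and the set is unchanged.
\item \emph{Unselected row, nonconstant $\ell_j$.} By choice of the quantile, $r_j\le r\le y$ (with threshold $r$ in the noninformative case). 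Hence
\[
P_j^{t-1}\cap\{\ell_j>\text{threshold}\}\subseteq P_j^{t-1}\cap\{\ell_j>r_j\},
\]
and the latter has $k_j^-$-volume at most $\alpha V_j^-$ by~\eqref{eq:abstract-quantile}, since the level set has measure zero. So $V_j^+\ge(1-\alpha)V_j^->0$. Positive $k_j^-$-volume inside $\aff P_j^{t-1}$ forces the dimension to remain $k_j^-$.
\end{itemize}

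In every case, compactness and convexity are preserved because we only intersect with closed halfspaces or hyperplanes.

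For part (iii), suppose $\q_t=\q_s$ with $s<t$. By consistency, every $W$ in the current product gives the same value $v$ at $\q_t$, namely the earlier response. The adversary's rule depends only on the current sets, so we must show the rule indeed returns $v$ and is noninformative.

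We claim every $\ell_j$ satisfies $\ell_j\le v$ on $P_j^{t-1}$. If some row had $\w$ with $\ell_j(\w)>v$, pick the other rows arbitrarily to get a consistent $W$ with $f_W(\q_t)>v$, a contradiction.

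Next, some row is identically $v$. Otherwise each row would have a point with $\ell_j<v$, and the product of these points would have $f_W<v$; this uses that each row has a point below $v$, and the maximum over finitely many rows of values below $v$ is still below $v$.

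Now consider a nonconstant row. Its upper $\alpha$-quantile satisfies $r_j<\max_{P_j}\ell_j\le v$. A constant row has $r_j\le v$, with equality for the row identically $v$. Hence $r=v$ is attained only by constant rows, the update is noninformative, it returns $v$, and intersecting with $\{\ell_j\le v\}$ changes nothing.

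The main obstacle is this last step: showing that a nonconstant row can never tie the maximum at a repeated query. It relies on the strict inequality $r_j<u$ from Lemma~\ref{lem:quantile-section}.
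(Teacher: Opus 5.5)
Your proof is correct and follows the paper's argument essentially step for step. It uses induction, the quantile cap-volume bound for unselected nonconstant rows, the large-section bound~\eqref{eq:large-section} for the selected row, retained constant rows as equality witnesses, and the same ``every row $\le v$, some row $\equiv v$'' product argument for repeated queries. The only cosmetic difference is in (iii): you obtain $r_j<v$ from $r_j<\max_{P_j}\ell_j\le v$ via the strict inequality $r<u$ of Lemma~\ref{lem:quantile-section}, whereas the paper argues directly that $\{\ell_j=v\}\cap P_j^{t-1}$ has zero $k_j^-$-volume, which is the same fact.
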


\begin{proof}
We argue inductively.  The assertions hold at $t=0$.  All updates take closed convex subsets of compact convex sets, so it remains to prove nonemptiness, the stated volume and dimension bounds, and exact consistency.

Consider first an informative update.  The selected set has the positive $(k_i^--1)$-dimensional volume guaranteed by~\eqref{eq:large-section}.  Because $\ell_i$ is nonconstant on $\aff(P_i^{t-1})$, the affine hyperplane $\{\ell_i=y\}$ intersects that affine hull in dimension $k_i^--1$; positive intrinsic volume then implies that $P_i^t$ has exactly this affine dimension.  This proves the selected-row claims.

For an unselected nonconstant row $j$, one has $y\geq r\geq r_j$, and hence
\[
 P_j^{t-1}\cap\{\ell_j>y\}
 \subseteq
 P_j^{t-1}\cap\{\ell_j\geq r_j\}.
\]
The latter cap has volume $\alpha V_j^-$ by the quantile definition, so $P_j^t$ retains at least $(1-\alpha)V_j^->0$.  Positive $k_j^-$-dimensional volume inside the old affine hull forces $\dim P_j^t=k_j^-$.  If $\ell_j$ is constant, its value is $r_j\leq r\leq y$, so the entire row set is retained.

The selected row satisfies $\ell_i=y$, every other retained row satisfies $\ell_j\leq y$, and therefore every matrix in the new Cartesian product has
\[
 f_W(\q_t)=\max_{j\in[m]}\ell_j(\w_j)=y.
\]
All earlier responses remain exact because each new row set is contained in its predecessor.  This proves the invariant after an informative update.

For a noninformative update, at least one constant row has value $r$ and is retained, so it is an equality witness.  Every nonconstant row retains at least a $(1-\alpha)$ fraction of its intrinsic volume by the same cap argument, and every constant row has value at most $r$ and is unchanged.  Hence dimensions are preserved and every matrix in the updated product has value exactly $r$ at the current query.  The induction is complete.

It remains to justify the repeated-query assertion.  Suppose the same point previously received response $y$, and consider the product immediately before its repetition.  By~\eqref{eq:consistency-invariant},
\begin{equation}\label{eq:repeated-product-equality}
 \max_{i\in[m]}\ell_i(\w_i)=y
 \qquad\text{for every }(\w_1,\ldots,\w_m)
 \in\prod_iP_i^{t-1}.
\end{equation}
First, every row value is at most $y$: otherwise, choosing a point with value greater than $y$ in that row and arbitrary points in all other rows would contradict~\eqref{eq:repeated-product-equality}.  Second, at least one row is identically equal to $y$.  Indeed, if no row were identically $y$, one could choose in each row a point with value strictly below $y$, and the resulting product point would again contradict~\eqref{eq:repeated-product-equality}.

For a nonconstant row, the set $P_i^{t-1}\cap\{\ell_i=y\}$ lies in a proper affine hyperplane and has zero $k_i^-$-dimensional volume.  Its upper $\alpha$-quantile therefore satisfies $r_i<y$.  Constant rows have $r_i\leq y$, and at least one has $r_i=y$.  Thus $r=y$ and every row attaining $r$ is constant.  The update is noninformative, returns the same response $y$, and leaves every row set unchanged because all row values are at most $y$.
\end{proof}
\vspace{2em}

The construction is therefore valid for every deterministic query rule, however discontinuous, and the final Cartesian product consists entirely of fixed objectives consistent with the complete exact transcript.

\subsection{Dimension and volume behavior}\label{subsec:potential}

We now quantify how much uncertainty the update rule can remove.  Each informative update spends one unit of total row codimension, while every update causes only a controlled loss of intrinsic volume.  These two budgets imply that, below the desired query threshold, linearly many row sets retain both high dimension and large normalized volume radius.  Urysohn's inequality and an averaging argument then produce a single direction in which their aggregate width is large.

Let $T$ be the actual number of queries on the adversarial transcript, and let $s$ be the number of informative updates.  At the end, write
\[
 k_i:=\dim P_i^T,
 \qquad
 c_i:=m-k_i,
 \qquad
 V_i:=\intrvol{k_i}(P_i^T).
\]
By Proposition~\ref{prop:product-invariant}, every informative update lowers exactly one row dimension by one, and a noninformative update lowers none.  Hence
\begin{equation}\label{eq:codim-sum}
 \sum_{i=1}^m c_i=s\leq T.
\end{equation}

At an informative update, the product of current intrinsic row volumes is multiplied by at least
\[
 \frac1{8m\tau}(1-\alpha)^{m-1};
\]
at a noninformative update, it is multiplied by at least $(1-\alpha)^m$.  Bernoulli's inequality gives
\[
 \left(1-\frac1{4m}\right)^m\geq1-\frac14=\frac34.
\]
Starting from $P_i^0=\tau B_m$, we obtain
\begin{equation}\label{eq:product-volume}
 \prod_{i=1}^m V_i
 \geq
 (\kappa_m\tau^m)^m
 \left(\frac34\right)^T
 \left(\frac1{8m\tau}\right)^s.
\end{equation}

Normalize each final row body by
\begin{equation}\label{eq:rho-def}
 \rho_i:=\frac{V_i}{\kappa_{k_i}\tau^{k_i}}.
\end{equation}
Then $0<\rho_i\leq1$.  To see the upper bound, let $A_i:=\aff(P_i^T)$.  If $k_i=0$, the claim follows from our conventions.  If $k_i\geq1$, the section $A_i\cap\tau B_m$ is either empty or a $k_i$-dimensional Euclidean ball of radius
\[
 \sqrt{\tau^2-\dist(\zero,A_i)^2}\leq\tau.
\]
Since $P_i^T\subseteq A_i\cap\tau B_m$, its intrinsic volume is at most $\kappa_{k_i}\tau^{k_i}$.

We use the following elementary comparison of Euclidean ball volumes.

\begin{lemma}[Euclidean ball-volume ratio]\label{lem:kappa-ratio}
For integers $0\leq k\leq m$,
\[
 \frac{\kappa_m}{\kappa_k}\geq m^{-(m-k)/2}.
\]
\end{lemma}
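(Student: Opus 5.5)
The plan is to telescope the ratio one dimension at a time. Writing
\[
 \frac{\kappa_m}{\kappa_k}=\prod_{j=k+1}^{m}\frac{\kappa_j}{\kappa_{j-1}},
\]
it suffices to show $\kappa_j/\kappa_{j-1}\geq m^{-1/2}$ for every $1\leq j\leq m$. Multiplying these $m-k$ factors then gives the claim, and the case $k=m$ is the empty product, equal to $1$.

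For a single step, I will use the slicing identity
\[
 \kappa_j=\kappa_{j-1}\int_{-1}^{1}(1-s^2)^{(j-1)/2}\,ds ,
\]
which holds for $j\geq1$ with the convention $\kappa_0=1$. This reduces the step to a lower bound on $I_j:=\int_{-1}^1(1-s^2)^{(j-1)/2}ds$. Restricting the integral to $|s|\leq j^{-1/2}$ gives
\[
 I_j\geq 2j^{-1/2}\bigl(1-1/j\bigr)^{(j-1)/2}.
\]
Bernoulli's inequality yields $(1-1/j)^{(j-1)/2}\geq 1-\frac{j-1}{2j}\geq\frac12$ whenever $(j-1)/2\geq1$. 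When $(j-1)/2<1$, i.e.\ $j\in\{1,2\}$, the exponent lies in $[0,1)$, so the power is at least the base $1-1/j$. That equals $1/2$ for $j=2$, and for $j=1$ one has $I_1=2$ directly. In every case $I_j\geq j^{-1/2}\geq m^{-1/2}$.

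Alternatively, I could use the closed form $\kappa_j/\kappa_{j-1}=\sqrt\pi\,\Gamma(\tfrac{j+1}{2})/\Gamma(\tfrac j2+1)$ together with Gautschi's inequality $\Gamma(x+1)/\Gamma(x+\tfrac12)\leq\sqrt{x+1}$ at $x=j/2$. This gives $\kappa_j/\kappa_{j-1}\geq\sqrt{\pi}/\sqrt{j/2+1}\geq m^{-1/2}$.

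The only mild obstacle is getting uniform constants for small $j$, which the case check above handles. The rest is routine.
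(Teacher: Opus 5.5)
Your proof is correct and follows the paper's own argument: telescope over $j$, use the slicing identity for $\kappa_j/\kappa_{j-1}$, and restrict the integral to $|t|\leq j^{-1/2}$. The only difference is cosmetic. You get $(1-1/j)^{(j-1)/2}\geq\frac12$ from Bernoulli's inequality plus a separate check of $j\in\{1,2\}$, whereas the paper uses $(1-1/j)^{j-1}\geq e^{-1}>\frac14$; your Gautschi alternative is also valid but not needed.
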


\begin{proof}
For $1\leq j\leq m$, slicing the unit $j$-ball perpendicular to one coordinate gives
\[
 \frac{\kappa_j}{\kappa_{j-1}}
 =\int_{-1}^{1}(1-t^2)^{(j-1)/2}\,dt.
\]
For $j\geq2$, restrict the integral to $\abs{t}\leq1/\sqrt j$.  The elementary bound
\[
 \left(1-\frac1j\right)^{j-1}\geq e^{-1}>\frac14
\]
follows, for example, from $\log(1-x)\geq-x/(1-x)$ for $x\in(0,1)$.  It implies
\[
 \frac{\kappa_j}{\kappa_{j-1}}
 \geq\frac{2}{\sqrt j}
 \left(1-\frac1j\right)^{(j-1)/2}
 \geq\frac1{\sqrt j}
 \geq\frac1{\sqrt m}.
\]
For $j=1$, $\kappa_1/\kappa_0=2$.  Multiplication from $j=k+1$ through $m$ proves the result.
\end{proof}\vspace{2em}

Using $\sum_i k_i=m^2-s$, divide~\eqref{eq:product-volume} by $\prod_i\kappa_{k_i}\tau^{k_i}$.  The powers of $\tau$ cancel, and Lemma~\ref{lem:kappa-ratio} gives
\begin{align}
 \prod_{i=1}^m\rho_i
 &\geq
 \left(\frac34\right)^T
 \left(\frac1{8m}\right)^s
 \prod_{i=1}^m\frac{\kappa_m}{\kappa_{k_i}}
 \nonumber\\
 &\geq
 \left(\frac34\right)^T
 \left(\frac1{8m^{3/2}}\right)^s.
 \label{eq:rho-product}
\end{align}
Define
\[
 D_i:=-\log\rho_i,
 \qquad
 L_m:=\log\!\left(\frac{32}{3}m^{3/2}\right).
\]
Since $s\leq T$, equation~\eqref{eq:rho-product} implies
\begin{equation}\label{eq:entropy-budget}
 \sum_{i=1}^mD_i\leq TL_m
 \leq4T\log(em),
\end{equation}
where the last elementary inequality holds for every $m\geq1$.

\subsubsection{A common direction of large aggregate width}\label{subsubsec:width}

Fix
\[
 \eta:=\frac1{100}
\]
and suppose that the transcript length satisfies
\begin{equation}\label{eq:T-threshold}
 T\leq\eta\frac{m^2}{\log(em)}.
\end{equation}
We now show that many final row bodies remain simultaneously large in both affine dimension and normalized volume radius.

\begin{lemma}[Many good rows]\label{lem:good-rows}
Under~\eqref{eq:T-threshold}, there is a set $G\subseteq[m]$ with $\abs{G}\geq m/2$ such that, for every $i\in G$,
\begin{equation}\label{eq:good-row-bounds}
 k_i\geq(1-4\eta)m,
 \qquad
 \rho_i^{1/k_i}\geq e^{-1/6}>\frac12.
\end{equation}
\end{lemma}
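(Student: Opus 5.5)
The plan is to run two Markov-type counting arguments, one on the codimension budget \eqref{eq:codim-sum} and one on the entropy budget \eqref{eq:entropy-budget}. Then take $G$ to be the rows that are good for both.

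\emph{Dimension.} By \eqref{eq:codim-sum} and \eqref{eq:T-threshold},
\[
\sum_i c_i\leq T\leq \eta m^2/\log(em)\leq \eta m^2 .
\]
Let $B_1:=\{i: c_i>4\eta m\}$. Markov gives $\abs{B_1}\cdot 4\eta m<\eta m^2$, so $\abs{B_1}<m/4$. Every $i\notin B_1$ satisfies $k_i\geq(1-4\eta)m$.

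\emph{Volume radius.} By \eqref{eq:entropy-budget} and \eqref{eq:T-threshold},
\[
\sum_i D_i\leq 4T\log(em)\leq 4\eta m^2 .
\]
The $D_i$ are nonnegative because $\rho_i\leq1$. Let $B_2:=\{i:D_i>16\eta m\}$. Then $\abs{B_2}<4\eta m^2/(16\eta m)=m/4$.

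\emph{Combining.} Put $G:=[m]\setminus(B_1\cup B_2)$, so $\abs{G}>m/2$. For $i\in G$ we have $k_i\geq(1-4\eta)m=0.96m>0$, hence
\[
-\log\rho_i^{1/k_i}=\frac{D_i}{k_i}\leq\frac{16\eta m}{0.96m}=\frac{0.16}{0.96}=\frac16 .
\]
This gives $\rho_i^{1/k_i}\geq e^{-1/6}$. Finally, $e^{-1/6}>1/2$ because $1/6<\log 2$.

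The only real obstacle is fitting the constants. The split of the failure budget into two quarters must give exactly the ratio $1/6$, and with $\eta=1/100$ it lands exactly there, as computed above. The bound $\log(em)\geq1$, used to drop the logarithm in the codimension step, is trivial. Positivity of $k_i$ on $G$ is needed so that $\rho_i^{1/k_i}$ makes sense, and it follows from the dimension bound since $m\geq2$.
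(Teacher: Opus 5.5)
Your proposal is correct and follows the paper's proof essentially verbatim: the same two Markov counts (rows with $c_i>4\eta m$ and rows with $D_i>16\eta m$, each at most $m/4$) and the same final ratio $16\eta/(1-4\eta)=1/6$. The only differences are cosmetic. You discard the factor $\log(em)\geq1$ in the codimension count, and you state explicitly the nonnegativity $D_i\geq0$ (from $\rho_i\leq1$), which the paper uses implicitly.
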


\begin{proof}
By~\eqref{eq:codim-sum}, the number of rows satisfying $c_i>4\eta m$ is at most
\[
 \frac{T}{4\eta m}
 \leq\frac{m}{4\log(em)}
 \leq\frac m4.
\]
By~\eqref{eq:entropy-budget} and~\eqref{eq:T-threshold},
\[
 \sum_i D_i\leq4\eta m^2,
\]
so at most $m/4$ rows can satisfy $D_i>16\eta m$.  Remove both exceptional sets.  The remaining set $G$ has size at least $m/2$, and for every $i\in G$,
\[
 \frac{D_i}{k_i}
 \leq\frac{16\eta}{1-4\eta}
 =\frac16.
\]
Since $\rho_i^{1/k_i}=e^{-D_i/k_i}$, this proves~\eqref{eq:good-row-bounds}.
\end{proof}\vspace{2em}

For a compact convex set $P\subseteq\R^m$ and a direction $\thetaa\in\R^m$, define the full directional width
\[
 w_P(\thetaa)
 :=\max_{p\in P}\ip{\thetaa}{p}
 -\min_{p\in P}\ip{\thetaa}{p}.
\]
If $P$ has affine dimension $k\geq1$, let $L:=\lin(P-P)$ and let $\sigma_L$ be normalized spherical probability measure on the unit sphere $S(L)$.  With this full-width normalization, a Euclidean ball of radius $r$ has mean width $2r$.  Urysohn's inequality, applied after translating $P$ into $L$, states that
\begin{equation}\label{eq:urysohn}
 \int_{S(L)}w_P(\phi)\,d\sigma_L(\phi)
 \geq
 2\left(\frac{\intrvol{k}(P)}{\kappa_k}\right)^{1/k};
\end{equation}
see Schneider~\cite[Eq.~(7.21), p.~382]{Schneider2014}.

\begin{lemma}[A common wide direction]\label{lem:common-direction}
Under~\eqref{eq:T-threshold}, there is a unit vector $\thetaa\in\R^m$ such that
\begin{equation}\label{eq:aggregate-width}
 \sum_{i\in G}w_{P_i^T}(\thetaa)
 \geq\frac{m\tau}{4}.
\end{equation}
\end{lemma}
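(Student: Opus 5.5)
The plan is to average over a uniformly random direction $\thetaa$ on the full sphere $S^{m-1}$ and show that the expected aggregate width is at least $m\tau/4$. Some direction then attains at least the average. By linearity of expectation it suffices to show
\[
 \int_{S^{m-1}} w_{P_i^T}(\thetaa)\,d\sigma(\thetaa)\geq\frac{\tau}{2}
 \qquad\text{for each } i\in G.
\]
Summing over $\abs{G}\geq m/2$ rows then gives $m\tau/4$.

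Fix $i\in G$. Put $P:=P_i^T$, $k:=k_i$ and $L:=\lin(P-P)$. Since $\rho_i>0$ and $k\geq(1-4\eta)m\geq1$, Urysohn's inequality~\eqref{eq:urysohn} applies inside $L$. Combined with~\eqref{eq:rho-def} and Lemma~\ref{lem:good-rows}, it gives
\[
 \int_{S(L)}w_P\,d\sigma_L
 \geq2\left(\frac{V_i}{\kappa_k}\right)^{1/k}
 =2\tau\rho_i^{1/k}
 \geq 2\tau e^{-1/6}.
\]

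Next I transfer from $S(L)$ to the ambient sphere. For $\thetaa\in S^{m-1}$ we have $w_P(\thetaa)=w_P(\proj_L\thetaa)$, because $P-P\subseteq L$. Width is also positively homogeneous, so
\[
 w_P(\thetaa)=\norm{\proj_L\thetaa}\,w_P\!\left(\frac{\proj_L\thetaa}{\norm{\proj_L\thetaa}}\right).
\]
When $\thetaa$ is uniform on $S^{m-1}$, the direction of $\proj_L\thetaa$ is uniform on $S(L)$. This follows from rotation invariance under the orthogonal group of $L$. Let $R$ denote the length of the projection $\proj_L\thetaa$. Then $R$ is independent of that direction. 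Hence
\[
 \int_{S^{m-1}}w_P\,d\sigma=\mathbb E[R]\int_{S(L)}w_P\,d\sigma_L .
\]

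The main step is to lower-bound $\mathbb E[R]$ when $k\geq(1-4\eta)m=0.96m$. I would use $R\leq1$, which gives $\mathbb E[R]\geq\mathbb E[R^2]=k/m\geq0.96$. So the mean width over $S^{m-1}$ is at least $0.96\cdot2e^{-1/6}\tau$. Since $e^{-1/6}\approx0.846$, this is about $1.62\tau$, far above $\tau/2$. Even a crude bound therefore suffices, and the constant $4$ in~\eqref{eq:aggregate-width} has ample slack.

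Finally, Fubini gives
\[
 \int_{S^{m-1}}\sum_{i\in G}w_{P_i^T}\,d\sigma\geq\frac m2\cdot\frac{\tau}{2}.
\]
Some unit vector $\thetaa$ achieves at least this value, which proves~\eqref{eq:aggregate-width}. The only delicate points are the independence/uniformity claim for the projection and the measurability of $\thetaa\mapsto w_P(\thetaa)$. The latter is immediate because width is continuous, being a difference of support functions.
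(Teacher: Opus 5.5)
Your proposal is correct and follows essentially the same argument as the paper: Urysohn's inequality inside $\lin(P_i^T-P_i^T)$, the factorization $w_{P_i^T}(\thetaa)=R_i\,w_{P_i^T}(\phi_i)$ with $\bb E R_i\geq\bb E R_i^2=k_i/m$, and averaging over the rows in $G$. The differences are cosmetic. The paper proves the independence of $R_i$ and $\phi_i$ through the Gaussian representation rather than your rotation-invariance argument, and it uses cruder constants ($\rho_i^{1/k_i}>1/2$ and $\bb E R_i\geq 1/2$) where you track sharper ones.
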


\begin{proof}
Fix $i\in G$ and put $L_i:=\lin(P_i^T-P_i^T)$.  By~\eqref{eq:urysohn},~\eqref{eq:rho-def}, and Lemma~\ref{lem:good-rows},
\begin{equation}\label{eq:intrinsic-width}
 \int_{S(L_i)}w_{P_i^T}(\phi)\,d\sigma_{L_i}(\phi)
 \geq2\tau\rho_i^{1/k_i}
 \geq\tau.
\end{equation}

Let $\thetaa$ be uniform on the ambient unit sphere $S^{m-1}$ and write
\[
 \proj_{L_i}\thetaa=R_i\phi_i,
 \qquad R_i\in[0,1],\quad \phi_i\in S(L_i).
\]
The standard Gaussian representation makes the distributional facts precise.  If $g$ is a standard Gaussian vector in $\R^m$ and $\thetaa=g/\norm{g}$, then $\proj_{L_i}g$ and $\proj_{L_i^\perp}g$ are independent isotropic Gaussian vectors.  On the probability-one event $\proj_{L_i}g\neq\zero$, the direction
\[
 \phi_i=\frac{\proj_{L_i}g}{\norm{\proj_{L_i}g}}
\]
is uniform on $S(L_i)$ and independent of
\[
 R_i=\frac{\norm{\proj_{L_i}g}}{\norm{g}}.
\]
Since width is positively homogeneous and depends only on the projection onto $L_i$,
\[
 w_{P_i^T}(\thetaa)=R_i w_{P_i^T}(\phi_i).
\]
Furthermore, $R_i\in[0,1]$ and rotational symmetry gives
\[
 \bb E R_i
 \geq\bb E R_i^2
 =\frac{k_i}{m}
 \geq1-4\eta
 \geq\frac12.
\]
Independence and~\eqref{eq:intrinsic-width} therefore imply
\[
 \bb E_{\thetaa}w_{P_i^T}(\thetaa)
 =\bb E R_i\,
   \bb E w_{P_i^T}(\phi_i)
 \geq\frac\tau2.
\]
Summing over $i\in G$ and using $\abs{G}\geq m/2$ gives
\[
 \bb E_{\thetaa}
 \sum_{i\in G}w_{P_i^T}(\thetaa)
 \geq\frac{m\tau}{4}.
\]
Some unit direction satisfies~\eqref{eq:aggregate-width}.
\end{proof}\vspace{2em}

\subsection{Two indistinguishable functions and completion of the proof}\label{subsec:separation}

We finally convert the residual row uncertainty into an optimization lower bound.  Taking opposite extremes of the good row sets in the common wide direction produces two fixed functions consistent with the same exact transcript.  The almost-uniform barycentric structure from Lemma~\ref{lem:barycentric} turns their aggregate row displacement into a constant separation between their unique minimizers.  The algorithm has one common output for both functions, so the growth inequality~\eqref{eq:quadratic-growth} forces error of order $d^{-1/2}$ on at least one of them.

Fix the direction $\thetaa$ from Lemma~\ref{lem:common-direction}.  Compactness of the row bodies allows us, for every $i\in G$, to choose
\begin{align*}
 \w_i^+&\in\argmaxop_{\w\in P_i^T}\ip{\thetaa}{\w},\\
 \w_i^-&\in\argminop_{\w\in P_i^T}\ip{\thetaa}{\w}.
\end{align*}
For $i\notin G$, choose any common point $\w_i^+=\w_i^-\in P_i^T$.  Let $W^+$ and $W^-$ be the resulting matrices.  Both belong to the final Cartesian product, so Proposition~\ref{prop:product-invariant} implies that $f_{W^+}$ and $f_{W^-}$ return the same exact value at every query in the transcript.

Equation~\eqref{eq:aggregate-width} gives
\begin{equation}\label{eq:mean-row-separation}
 \frac1m\sum_{i=1}^m
 \ip{\thetaa}{\w_i^+-\w_i^-}
 \geq\frac\tau4.
\end{equation}
Let
\[
 \p_{W^\pm}=(a\lambda^\pm,\z^\pm)
\]
be their closest points to the origin.

\begin{lemma}[Separation of the projection directions]\label{lem:projection-separation}
The normalized closest points satisfy
\begin{equation}\label{eq:u-separation}
 \norm{
 \frac{\p_{W^+}}{\norm{\p_{W^+}}}
 -
 \frac{\p_{W^-}}{\norm{\p_{W^-}}}
 }
 >\frac1{600}.
\end{equation}
\end{lemma}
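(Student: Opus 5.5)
The plan is to bound the difference of the two normalized closest points from below by its component along the direction $(\zero,\thetaa)\in\R^m\times\R^m$. Since $(\zero,\thetaa)$ is a unit vector, the left side of~\eqref{eq:u-separation} is at least
\[
 \frac{\ip{\thetaa}{\z^+}}{\norm{\p_{W^+}}}-\frac{\ip{\thetaa}{\z^-}}{\norm{\p_{W^-}}}.
\]
I would split this into two pieces: an unnormalized separation of $\z^+$ and $\z^-$ in direction $\thetaa$, and a correction from the slightly different normalizations. Both pieces are controlled by Lemma~\ref{lem:barycentric}.

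\emph{Step 1: separation of $\z^\pm$ along $\thetaa$.} By Lemma~\ref{lem:barycentric}, $\z^\pm=\sum_i\lambda_i^\pm\w_i^\pm$. I would write
\[
 \ip{\thetaa}{\z^+-\z^-}
 =\sum_{i=1}^m\lambda_i^+\ip{\thetaa}{\w_i^+-\w_i^-}
 +\sum_{i=1}^m(\lambda_i^+-\lambda_i^-)\ip{\thetaa}{\w_i^-}.
\]
For the first sum, every term $\ip{\thetaa}{\w_i^+-\w_i^-}$ is nonnegative by the choice of $\w_i^\pm$. By~\eqref{eq:lambda-close}, $\lambda_i^+\geq(1-2\Gamma^{-2})/m$. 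Combined with~\eqref{eq:mean-row-separation}, the first sum is at least $(1-2\Gamma^{-2})\tau/4$.

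For the second sum, \eqref{eq:lambda-close} applied to both matrices gives $\abs{\lambda_i^+-\lambda_i^-}\leq 4/(\Gamma^2m)$. Also $\abs{\ip{\thetaa}{\w_i^-}}\leq\tau$, so the second sum has absolute value at most $4\tau/\Gamma^2$. With $\Gamma=100$ this gives
\[
 \ip{\thetaa}{\z^+-\z^-}\geq\tau\left(\tfrac14-\tfrac{1}{2\Gamma^2}-\tfrac{4}{\Gamma^2}\right)\geq 0.249\,\tau.
\]

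\emph{Step 2: normalization.} Write $N^\pm:=\norm{\p_{W^\pm}}$ and decompose
\[
 \frac{\ip{\thetaa}{\z^+}}{N^+}-\frac{\ip{\thetaa}{\z^-}}{N^-}
 =\frac{\ip{\thetaa}{\z^+-\z^-}}{N^+}+\ip{\thetaa}{\z^-}\left(\frac1{N^+}-\frac1{N^-}\right).
\]
By~\eqref{eq:p-norm-range}, $N^+\leq (a/\sqrt m)\sqrt{1+\Gamma^{-2}}$. Since $\tau\sqrt m/a=1/\Gamma$, the main term is at least $0.249/(\Gamma\sqrt{1+\Gamma^{-2}})\approx 2.49\times10^{-3}$.

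For the correction term, the same range~\eqref{eq:p-norm-range} gives
\[
 \abs{\tfrac1{N^+}-\tfrac1{N^-}}\leq\frac{\sqrt m}{a}\bigl(1-(1+\Gamma^{-2})^{-1/2}\bigr)\leq\frac{\sqrt m}{2a\Gamma^2}.
\]
Also $\abs{\ip{\thetaa}{\z^-}}\leq\norm{\z^-}\leq\tau$, as shown in the proof of Lemma~\ref{lem:barycentric}. Hence the correction has absolute value at most $1/(2\Gamma^3)=5\times10^{-7}$. The total is about $2.48\times 10^{-3}$, which exceeds $1/600\approx1.67\times10^{-3}$.

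The main obstacle is Step~1. The shift in barycentric weights between $W^+$ and $W^-$ could in principle cancel the aggregate row displacement. It is harmless only because~\eqref{eq:lambda-close} makes the weight perturbation smaller by a factor $\Gamma^{-2}$ than the $\tau/4$ displacement from Lemma~\ref{lem:common-direction}. So this is the step where the constants must be checked carefully; the rest is arithmetic with $\Gamma=100$.
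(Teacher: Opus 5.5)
Your proof is correct and follows the paper's route. You bound $\ip{\thetaa}{\z^+-\z^-}$ from below using the near-uniform weights~\eqref{eq:lambda-close} and the aggregate width~\eqref{eq:mean-row-separation}, and then absorb the normalization through the narrow range~\eqref{eq:p-norm-range}. The only difference is cosmetic: you project the normalized difference directly onto the unit vector $(\zero,\thetaa)$, so the normalization error is multiplied by $\abs{\ip{\thetaa}{\z^-}}\leq\tau$ and costs only $O(\Gamma^{-3})$, whereas the paper first shows $\norm{\p_{W^+}-\p_{W^-}}>\tau/5$ and then applies an angular--radial triangle inequality costing $O(\Gamma^{-2})$; both clear $1/600$ comfortably.
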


\begin{proof}
Write
\[
 \lambda_i^\pm=\frac1m+e_i^\pm.
\]
By~\eqref{eq:lambda-close}, $\abs{e_i^\pm}\leq2/(\Gamma^2m)$.  In particular,
\[
 \abs{\sum_i e_i^\pm\ip{\thetaa}{\w_i^\pm}}
 \leq\sum_i\frac{2}{\Gamma^2m}\tau
 =\frac{2\tau}{\Gamma^2}.
\]
Using~\eqref{eq:mean-row-separation}, we obtain
\begin{align}
 \ip{\thetaa}{\z^+-\z^-}
 &=\frac1m\sum_i\ip{\thetaa}{\w_i^+-\w_i^-}
   +\sum_i e_i^+\ip{\thetaa}{\w_i^+}
   -\sum_i e_i^-\ip{\thetaa}{\w_i^-}
 \nonumber\\
 &\geq\frac\tau4-\frac{4\tau}{\Gamma^2}
 >\frac\tau5,
 \label{eq:z-separation}
\end{align}
where the last inequality uses $\Gamma=100$.  Hence
\begin{equation}\label{eq:p-separation}
 \norm{\p_{W^+}-\p_{W^-}}
 \geq\norm{\z^+-\z^-}
 >\frac\tau5
 =\frac{a}{5\Gamma\sqrt m}.
\end{equation}

Set
\[
 r_0:=\frac a{\sqrt m},
 \qquad
 R_0:=r_0\sqrt{1+\Gamma^{-2}},
 \qquad
 \widehat\p^\pm:=\frac{\p_{W^\pm}}{\norm{\p_{W^\pm}}}.
\]
By~\eqref{eq:p-norm-range}, $r_0\leq\norm{\p_{W^\pm}}\leq R_0$.  Decomposing the difference into an angular and a radial part gives
\begin{align*}
 \norm{\p_{W^+}-\p_{W^-}}
 &\leq R_0\norm{\widehat\p^+-\widehat\p^-}
   +\abs{\norm{\p_{W^+}}-\norm{\p_{W^-}}}\\
 &\leq R_0\norm{\widehat\p^+-\widehat\p^-}+(R_0-r_0).
\end{align*}
Combining this with~\eqref{eq:p-separation}, dividing by $r_0$, and using $\sqrt{1+t}\leq1+t/2$ for $t\geq0$ yields
\[
 \norm{\widehat\p^+-\widehat\p^-}
 >
 \frac{\frac1{5\Gamma}-\frac1{2\Gamma^2}}
      {1+\frac1{2\Gamma^2}}.
\]
At $\Gamma=100$, the right-hand side exceeds $1/600$.
\end{proof}\vspace{2em}

By~\eqref{eq:qstar}, the two unique minimizers are the negatives of the normalized closest points.  Lemma~\ref{lem:projection-separation} therefore separates the minimizers by more than $1/600$.

We now complete the proof of Theorem~\ref{thm:lower}.

Let
\[
 N_m:=\left\lfloor\eta\frac{m^2}{\log(em)}\right\rfloor
\]
and fix a deterministic algorithm that makes at most $N_m$ queries.  Run it against the resisting oracle defined in Subsection~\ref{subsec:resisting}.  If it stops early, let $T$ be its actual number of queries; then~\eqref{eq:T-threshold} holds.  Let $\widehat\q\in B_d$ be its output.

The two fixed functions $f_{W^+},f_{W^-}\in\cH_d$ constructed above reproduce the complete exact transcript.  Determinism therefore forces the algorithm to make the same queries and return the same output $\widehat\q$ on both functions.  Since their minimizers are more than $1/600$ apart, the triangle inequality gives, for at least one sign,
\[
 \norm{\widehat\q-\q_{W^\pm}^*}>\frac1{1200}.
\]
For that instance,~\eqref{eq:quadratic-growth} and the lower bound in~\eqref{eq:p-norm-range} imply
\begin{align*}
 f_{W^\pm}(\widehat\q)-\min_{B_d}f_{W^\pm}
 &\geq
 \frac{a}{2\sqrt m}\left(\frac1{1200}\right)^2\\
 &=\frac{1}{5{,}760{,}000\sqrt m}\\
 &>\frac{10^{-7}}{\sqrt d},
\end{align*}
where the last inequality uses $m\leq d$ and $1/5{,}760{,}000>10^{-7}$.  Thus every algorithm using at most $N_m$ queries fails on some member of $\cH_d$ at accuracy $10^{-7}/\sqrt d$.

Because $m=d/2$ and $\log(em)\leq2\log(d+1)$ for all sufficiently large even $d$,
\[
 \eta\frac{m^2}{\log(em)}
 \geq\frac{\eta}{8}\frac{d^2}{\log(d+1)}.
\]
Every integer query budget through $N_m$ fails, so the successful worst-case complexity is at least $N_m+1>\eta m^2/\log(em)$.  Taking $c=\eta/8$ and increasing $d_0$ if necessary proves Theorem~\ref{thm:lower} in even dimension; the reduction at the start of the section gives the odd-dimensional case.

\section{Lifting to the Mixed-Integer Case}\label{sec:mixed-transfer}

We first specify the mixed-integer value-oracle model used in this section.  For $n,d\geq1$, let
\[
 \mathcal D_{n,d}:=[0,1]^n\times[-1,1]^d,
 \qquad
 \mathcal D_{n,d}^{\mathrm{MI}}:=\{0,1\}^n\times[-1,1]^d.
\]
Let $\cF_{n,d}^{\mathrm{MI}}$ be the class of finite convex functions $F:\mathcal D_{n,d}\to\R$ such that $F(\x,\cdot)$ is Euclidean $1$-Lipschitz on $[-1,1]^d$ for every $\x\in\{0,1\}^n$.  A deterministic algorithm may query the exact value of $F$ at any point of $\mathcal D_{n,d}$, including points with fractional first coordinates, and must output $(\widehat\x,\widehat\y)\in\mathcal D_{n,d}^{\mathrm{MI}}$.  Write $Q_{\mathrm{MI},\val}^{\det}(n,d,\eps)$ for the least worst-case number of queries needed to guarantee
\[
 F(\widehat\x,\widehat\y)
 -\min_{(\x,\y)\in\mathcal D_{n,d}^{\mathrm{MI}}}F(\x,\y)
 \leq\eps
 \qquad(F\in\cF_{n,d}^{\mathrm{MI}}).
\]
The product domain is known, so no feasibility oracle is part of this model.
With unrestricted deterministic computation, this query complexity agrees with the information-complexity definition used below: a transcript permits a valid output exactly when all instances consistent with it have a common $\eps$-approximate solution.

We use the following transfer result.  Its terminology is recalled only to make clear the hypotheses needed below.

\begin{theorem}[Basu--Jiang--Kerger--Molinaro transfer theorem]\label{thm:bjkm-transfer}
In the notation of Basu, Jiang, Kerger, and Molinaro~\cite[Theorem~7]{BasuJiangKergerMolinaro2025}, let $\{\mathcal H_{n,d}\}$ be a hereditary family of permissible queries such that $\mathcal H_{0,d}$ contains every value-threshold query
\[
 h_c(v)=\operatorname{sgn}(v+c),\qquad c\in\R.
\]
Suppose that $\mathcal I\subseteq\mathcal I_{0,d,R,\rho,M}$ is a class of continuous convex unconstrained instances, where ``unconstrained'' means that the feasible region is the full box $[-R,R]^d$, and that $\mathcal G_0$ is a first-order chart satisfying
\[
 \operatorname{icomp}_{\eps}
 \bigl(\mathcal I,\mathcal O(\mathcal G_0,\mathcal H_{0,d})\bigr)
 \geq\ell.
\]
If all instances in $\mathcal I$ have the same optimal value, then, for every $n\geq1$, there is a first-order chart $\mathcal G_n$ such that
\[
 \operatorname{icomp}_{\eps}
 \bigl(\mathcal I_{n,d,R,\rho,M},
       \mathcal O(\mathcal G_n,\mathcal H_{n,d})\bigr)
 \geq2^{n-1}\ell.
\]
\end{theorem}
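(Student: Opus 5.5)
The plan is to reprove the transfer theorem by a direct adversary argument, following the strategy of \cite{BasuJiangKergerMolinaro2025}: plant $2^{n-1}$ independent copies of the continuous hard family on well-separated integer fibers, and force the algorithm to pay the continuous lower bound $\ell$ on essentially every copy.

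First, choose the set $S\subseteq\{0,1\}^n$ of even-parity binary points. It has $\abs{S}=2^{n-1}$, and any two of its points are at Euclidean distance at least $\sqrt2$. The odd points are left as dummy fibers. For an assignment $(I_{\x})_{\x\in S}$ with $I_{\x}\in\mathcal I$, I will build one instance in $\mathcal I_{n,d,R,\rho,M}$. Its fiber over $\x\in S$ is $I_{\x}$, and every dummy fiber has value at least the common optimum $\mathrm{OPT}$ plus a large margin. Off the binary points, the function is a convex interpolation in which each point $(\bar\x,\y)$ with $\bar\x$ fractional is governed by the integer point nearest to $\bar\x$. Concretely, I would take a maximum of the lifted fiber functions plus a penalty $M'\norm{\bar\x-\x}^2$-type term around each $\x$. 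The penalty slope is chosen large, within the allowed $\rho,M$, so that any fractional query is dominated by the contribution of a single fiber.

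The next step is to check that the instance lies in the class and is convex. Because the sets of points closest to different elements of $S$ are separated, the penalty term can be taken large enough that the maximum is convex and that its value and subgradient at any query are determined by one fiber's data, up to a known shift. This is where the first-order chart $\mathcal G_n$ is defined. The chart maps a query at $(\bar\x,\y)$ to a query of the chart $\mathcal G_0$ at $\y$ on the relevant fiber $I_{\x}$. It then post-composes the response with an affine change of value, which is permitted because $\{\mathcal H_{n,d}\}$ is hereditary and contains all value-threshold queries. A query that lands in a dummy region reveals nothing about any $I_{\x}$.

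The adversary then runs, independently on each fiber $\x\in S$, the resisting strategy that witnesses $\operatorname{icomp}_\eps(\mathcal I,\mathcal O(\mathcal G_0,\mathcal H_{0,d}))\geq\ell$. Each mixed-integer query is charged to at most one fiber. Suppose the algorithm stops having made fewer than $2^{n-1}\ell$ queries; then some fiber $\x_0$ has received fewer than $\ell$ charged queries. Here the common optimal value is essential. The global optimum equals $\mathrm{OPT}$ and is attained in every planted fiber, so an output is $\eps$-optimal exactly when it is an $\eps$-optimal solution of its own fiber's instance. This is the step where a fiber with a lower value could otherwise be identified cheaply. Hence the output cannot lie in a dummy fiber, and it cannot lie in a fiber the algorithm has not fully resolved. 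The plan is to exploit this by a standard reallocation argument: the adversary fixes the fibers one at a time in the order in which they reach $\ell$ queries, and keeps every unresolved fiber ambiguous between two instances of $\mathcal I$ that have no common $\eps$-solution.

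This yields a lower bound of the form $(2^{n-1}-1)\ell+\ell=2^{n-1}\ell$. The main obstacle is the convex gluing in the second step. It must make every fractional query, including queries deep inside the cube, informative about at most one fiber under the first-order chart, while keeping the instance inside the prescribed $(R,\rho,M)$ class. If the quadratic penalty fails to do this, I would instead use a piecewise-linear penalty $M'\,\dist(\bar\x,\x)$ built from the parity structure of $S$, and verify the single-fiber property cell by cell.
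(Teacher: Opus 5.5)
\textbf{Main gap: your plan never makes a planted fiber unusable.} The paper does not prove this theorem; it imports it from \cite[Theorem~7]{BasuJiangKergerMolinaro2025}. It remarks only that the hard instances put on each binary fiber either a member of the common-optimum family or a \emph{finite pointwise maximum} of such members. That second kind of fiber is what your plan lacks, and without it the counting step fails.

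In your construction every planted fiber is some $I_{\x}\in\mathcal I$, so every planted fiber attains $\mathrm{OPT}$. As you observe yourself, an output is then $\eps$-optimal as soon as it is $\eps$-optimal within its own fiber. An algorithm can therefore ignore all planted fibers but one and run an optimal continuous method there. So your family is solvable with $\operatorname{icomp}_\eps(\mathcal I,\mathcal O(\mathcal G_0,\mathcal H_{0,d}))$ queries. Since the hypothesis allows $\ell$ to equal that number, no adversary on your family can force $2^{n-1}\ell$ queries once $n\geq2$.

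Your ``reallocation'' step does not say what a fiber is fixed to once it has received $\ell$ queries, and no choice inside $\mathcal I$ works. To force further search, a fiber that has used up its budget must be given minimum strictly above $\mathrm{OPT}+\eps$, and no member of $\mathcal I$ has that property.

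\textbf{The missing idea.} Suppose a planted fiber is about to receive its $\ell$-th charged query. The continuous resisting strategy then already supplies $I,I'\in\mathcal I$ that are consistent with that fiber's transcript and have no common $\eps$-solution. Commit the fiber to $\max\{I,I'\}$. This function has the following properties.
\begin{itemize}
\item It is convex and keeps the fiberwise parameters.
\item It still reproduces the value and threshold answers, since equal values, or equal signs of $v+c$, survive the maximum. First-order data is what the chart $\mathcal G_n$ must accommodate.
\item By continuity and compactness, its minimum exceeds $\mathrm{OPT}+\eps$.
\end{itemize}
This is the real role of the common optimal value. It makes ``good'' fibers (minimum $\mathrm{OPT}$) and ``bad'' fibers (minimum $>\mathrm{OPT}+\eps$) comparable across fibers.

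\textbf{How the count then closes.} With fewer than $2^{n-1}\ell$ queries, some planted fiber is never committed, and its consistent set still has no common $\eps$-solution.
\begin{itemize}
\item If the output $(\widehat\x,\widehat\y)$ lies in an uncommitted fiber, set that fiber to a consistent $I$ with $I(\widehat\y)>\mathrm{OPT}+\eps$.
\item Otherwise the output lies in a committed or dummy fiber. Set some uncommitted fiber to any consistent member of $\mathcal I$.
\end{itemize}
In both cases the optimum is $\mathrm{OPT}$ and the output fails, which gives exactly $2^{n-1}\ell$.

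\textbf{Secondary issue: your gluing is not yet convex.} Adding a quadratic penalty around each $\x$ destroys $F(\x,\cdot)=I_{\x}$ at the other binary points. Subtracting it, or a Euclidean distance, produces terms that are concave in the integer variables, so their maximum is not convex. What works is the $\ell_1$ distance $\lVert\bar\x-\x\rVert_1$, which is affine on $[0,1]^n$. Combine it with a known baseline below $\mathrm{OPT}$, using that even vertices are at $\ell_1$-distance at least $2$, and add a convex term vanishing on $\conv(S)$ to lift the odd fibers.

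A fractional query then returns $\max\{\text{known},\,f_{\x}(\y)+\text{known shift}\}$, a truncation of a single fiber's value. This is not an affine change of value, and it is the kind of response the threshold queries $h_c$ in the hypothesis are there to handle.
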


For exact values, the oracle condition in Theorem~\ref{thm:bjkm-transfer} causes no loss.  Take the permissible value queries to be the identity together with all $h_c$, and allow no subgradient or separation queries.  This family is hereditary: under a known shift $v\mapsto v+\delta$, an identity response is postprocessed by adding $\delta$, while $h_c(v+\delta)=h_{c+\delta}(v)$.  Conversely, one exact value answers any threshold query with free postprocessing.  Hence this augmented oracle and the ordinary exact-value oracle have identical query complexity; the first-order chart in the transfer theorem is immaterial.

\begin{proof}[Proof of Corollary~\ref{cor:mixed-value}]
We first prepare the continuous hard family for Theorem~\ref{thm:bjkm-transfer}.  Assume initially that $d=2m$ and put $\eps_d:=\eps_0/\sqrt d$.  By~\eqref{eq:min-value} and~\eqref{eq:p-norm-range}, the optimal values of the functions in $\cH_d$ lie in an interval of length
\begin{equation}\label{eq:optimum-spread}
 \Delta_d
 =\frac a{\sqrt m}\left(\sqrt{1+\Gamma^{-2}}-1\right)
 =\frac{a\sqrt2}{\sqrt d}\left(\sqrt{1+\Gamma^{-2}}-1\right).
\end{equation}
Thus this interval can be partitioned into a universal number
\[
 K:=\left\lceil
 \frac{2a\sqrt2}{\eps_0}
 \left(\sqrt{1+\Gamma^{-2}}-1\right)
 \right\rceil
\]
of intervals of length at most $\eps_d/2$.  With the constants used in the proof, $K=708$.

At least one of these $K$ subclasses retains the lower bound $\Omega(d^2/\log(d+1))$ at accuracy $\eps_d$.  Indeed, let $T_j$ be the complexity of subclass $j$.  Run one optimal algorithm for each subclass, evaluate the $K$ returned points, and return the one having the smallest value.  The algorithm belonging to the unknown function's own subclass produces an $\eps_d$-solution, so this procedure solves all of $\cH_d$ using at most $\sum_{j=1}^K T_j+K$ queries.  The lower bound proved above therefore gives
\[
 \max_{j\in[K]}T_j
 \geq\frac{N_m+1-K}{K}
 =\Omega\!\left(\frac{d^2}{\log(d+1)}\right),
\]
where the last equality holds for all sufficiently large $d$ because $K$ is universal.

Fix such a subclass, and let $[\alpha_d,\beta_d]$ be its interval of optimal values.  For $W$ in this subclass and $\q\in\R^d$, define the global convex functions
\begin{equation}\label{eq:box-floor-transform}
 \Phi_W(\q):=f_W(\q)+(\norm{\q}-1)_+,
 \qquad
 g_W(\q):=\frac12\max\{\Phi_W(\q),\beta_d\}.
\end{equation}
Here $s_+:=\max\{s,0\}$.  These functions are convex.  The first term in $\Phi_W$ is Euclidean $1$-Lipschitz and the radial penalty is Euclidean $1$-Lipschitz, so every $g_W$ is Euclidean $1$-Lipschitz.

This transformation loses only a constant factor in accuracy and no queries.  To see this, set
\[
 r:=\max\{1,\norm{\q}\},
 \qquad
 \q^\circ:=\frac{\q}{r}\in B_d.
\]
The max-linear function $f_W$ is positively homogeneous, and hence
\begin{equation}\label{eq:radial-query-simulation}
 \Phi_W(\q)=r f_W(\q^\circ)+(r-1).
\end{equation}
Thus one exact query to $f_W$ simulates one exact query to $g_W$.  Moreover, $f_W(\q^\circ)\geq-1$, so~\eqref{eq:radial-query-simulation} gives
\[
 \Phi_W(\q)\geq f_W(\q^\circ).
\]
It follows that $\min_{[-1,1]^d}\Phi_W=\min_{B_d}f_W$.  Because this minimum belongs to $[\alpha_d,\beta_d]$, every $g_W$ has the same optimal value $\beta_d/2$.

Now suppose that $\widehat\q$ is an $\eps_d/4$-solution of $g_W$.  Then
\[
 \Phi_W(\widehat\q)\leq\beta_d+\frac{\eps_d}{2}.
\]
Radially projecting $\widehat\q$ and using the width of the selected interval gives
\begin{align*}
 f_W(\widehat\q^{\circ})-\min_{B_d}f_W
 &\leq
 \left(\beta_d-\min_{B_d}f_W\right)+\frac{\eps_d}{2}\\
 &\leq\eps_d.
\end{align*}
Consequently, the unconstrained box family $\{g_W\}$, whose members have a common optimal value, still requires $\Omega(d^2/\log(d+1))$ exact-value queries at accuracy $\eps_d/4$.  It satisfies the parameters of Theorem~\ref{thm:bjkm-transfer}: the feasible region is $[-1,1]^d$, one may take $R=\rho=1$, and Euclidean $1$-Lipschitz continuity implies $\sqrt d$-Lipschitz continuity in the $\ell_\infty$ norm.

Applying Theorem~\ref{thm:bjkm-transfer} with the exact-value specialization described above gives
\[
 Q_{\mathrm{MI},\val}^{\det}\!\left(n,d,\frac{\eps_0}{4\sqrt d}\right)
 =\Omega\!\left(2^n\frac{d^2}{\log(d+1)}\right).
\]
The construction in the proof of the transfer theorem~\cite[Section~2]{BasuJiangKergerMolinaro2025} uses the known product domain $[0,1]^n\times[-1,1]^d$, the $2^n$ fibers indexed by $\{0,1\}^n$, and queries at fractional first coordinates.  On each binary fiber it uses a member of the common-optimum family above or a finite pointwise maximum of such members, so the fiber remains Euclidean $1$-Lipschitz.  Hence its hard instances belong to $\cF_{n,d}^{\mathrm{MI}}$.  For odd $d$, let $d'=d-1$ and make the objective independent of the last continuous coordinate.  Since $\eps_0/(4\sqrt d)\leq\eps_0/(4\sqrt{d'})$ and $d'^2/\log(d'+1)=\Omega(d^2/\log(d+1))$, monotonicity gives the same result after changing only the universal constants.  This proves the lower bound with, for example, $\eps_{\mathrm{MI}}=\eps_0/4$.

For the upper bound, enumerate the $2^n$ binary fibers.  On each one, apply the Protasov exact-value bound quoted before~\eqref{eq:protasov-upper} to the convex function $F(\x,\cdot)$ on $[-1,1]^d$.  Its objective range is at most $2\sqrt d$, so at accuracy $\eps_{\mathrm{MI}}/\sqrt d$ this takes $O(d^2\log^2(d+1))$ queries per fiber.  Evaluate the returned candidates and select the one with smallest value.  The candidate from an optimal fiber is accurate for the mixed-integer problem, and the selection can only improve its value.  The total is $O(2^n d^2\log^2(d+1))$, completing the proof.
\end{proof}\vspace{2em}

\section{Discussion}\label{sec:discussion}

The main implication of our result is that exact values and full first-order information have polynomially different deterministic complexities on the same nonsmooth convex class.  This is not a finite-precision effect: the algorithm receives exact real values and may use unlimited computation and memory.  The separation therefore isolates the cost of receiving no subgradient or separating direction.

The proof's central feature is the exact Cartesian-product uncertainty invariant.  One row of the max-affine objective certifies each response while the other rows retain enough dimension and intrinsic volume.  Aggregate width and the almost-uniform barycentric structure then turn this residual row uncertainty into separated minimizers.  This mechanism avoids assigning a finite information content to an exact real response and may be useful for other partial-information oracles.

The mixed-integer consequence shows that the continuous difficulty combines multiplicatively with the integer dimension: the complexity is $\widetildeTheta(2^n d^2)$ even when queries with fractional first coordinates are allowed.  Its proof also shows that applying a mixed-integer transfer theorem may require nontrivial preparation of the continuous hard family, here through the common-optimum and full-box reductions.

Several questions remain open.  The lower and upper bounds still differ by logarithmic factors, and it is unclear whether the losses in the volume argument or in Protasov's method are intrinsic.  The present hard family has optimum depth $\Theta(d^{-1/2})$, so a different construction would be needed to obtain a comparable lower bound at dimension-free accuracy.  The resisting oracle is tied to a deterministic transcript and does not yield a hard distribution for randomized algorithms.  Finally, the result does not address whether a superlinear value-only lower bound persists under additional regularity such as smoothness or strong convexity.

\backmatter

\bmhead{Acknowledgments}
As noted in the introduction, modern AI tools were used extensively to establish the presented results. See appendix \ref{subsec:ai-usage} for details. 

\bibliography{value_oracle_references}

\newpage
\appendix
\section{AI usage and methodology}\label{appendix}

Section \ref{prompt for initial proof} gives the full prompt used that led to the initial proof after 148 minutes of processing. This initial proof was refined into what is in this manuscript. The exact chat, including the full response, for the initial proof can be viewed \href{https://chatgpt.com/share/6a55aa50-b484-83ea-85c0-c7e7b4bda41c}{here}: \begin{verbatim}
    https://chatgpt.com/share/6a55aa50-b484-83ea-85c0-c7e7b4bda41c
\end{verbatim}
Note that this proof actually works for accuracy of order $d^{-3}$, not just the stated $d^{-4}$. This is the result that we have formally verified in Lean (the improvement to $d^{-1/2}$ is a much larger undertaking to formally verify in Lean, largely due to absence in avalable Lean libraries of much advanced convex geometry needed for Urysohn's inequality). 

The chat that led to the main refinement of the accuracy, using a similar prompt in addition the initial proof, followed by 230 minutes of thinking on GPT Sol 5.6 Pro, can be found \href{https://chatgpt.com/share/6a55ad10-7644-83ea-859e-5483d2e0dff0}{here}: 
\begin{verbatim}
    https://chatgpt.com/share/6a55ad10-7644-83ea-859e-5483d2e0dff0
\end{verbatim}

\subsection{Initial Prompt}\label{prompt for initial proof}
Current task statement:

For every integer $d \ge 2$, let

\[B_d = \{x \in \mathbb{R}^d : \lVert x\rVert_2 \le 1\]

be the closed Euclidean unit ball. Let $F_d$ be the class of all real-valued functions $f : B_d \to \mathbb{R}$ satisfying:

$f$ is convex on $B_d$;

$f$ is 1-Lipschitz in the Euclidean norm:$|f(x) - f(y)| \le \lVert x-y\rVert_2$for all $x,y$ in $B_d$;

$f(0) = 0$.

No differentiability, smoothness, strict convexity, strong convexity, polyhedrality, finite representation, or other structure is assumed. The class contains both smooth and nonsmooth functions; “nonsmooth convex optimization” here means that an algorithm must work without any smoothness assumption.

The only access to $f$ is an exact function-value oracle. When queried at $x$ in $B_d$, the oracle returns the exact real number $f(x)$. It returns no gradient, subgradient, separating hyperplane, proximal point, comparison bit, stochastic estimate, or other information.

A deterministic zeroth-order algorithm may adaptively choose queries

\[x_1, x_2, \ldots, x_T \in B_d,\]

where $x_t$ may be an arbitrary deterministic function of $d$ and the exact previous answers $f(x_1),\ldots,f(x_{t-1})$. After at most $T$ oracle calls it outputs an arbitrary point $x_{\mathrm{hat}}$ in $B_d$. The output need not have been queried. The algorithm may perform unlimited computation and exact real arithmetic between queries. Its query maps maybe discontinuous. There is no running-time, memory, numerical-stability, linear-span,locality, or finite-precision restriction. The algorithm and all of its rules must be fixed independently of the unknown $f$, and it has no access to $f$ except through the value oracle.

Queries outside $B_d$ are not permitted. Repeated queries count separately. Queries issued in a batch or in parallel count individually. Every evaluation used in a linesearch, interpolation procedure, rejected trial, gradient estimate, initialization step, or stopping test counts as an oracle call. Computations involving the explicitly known ball $B_d$ are free because only function-value oracle complexity is being measured.

Define $Q_{\mathrm{val}}^{\mathrm{det}}(d,\epsilon)$ to be the smallest integer $T$ for which there exists such a deterministic exact-value algorithm satisfying

\[f(x_{\mathrm{hat}}) - \min_{x \in B_d} f(x) \le \epsilon\]

for every $f$ in $F_d$.

Resolve completely the polynomial dependence on $d$ of

\[Q_{\mathrm{val}}^{\mathrm{det}}(d,d^{-4}).\]

A complete resolution must identify an explicit exponent $\alpha$ and prove

\[Q_{\mathrm{val}}^{\mathrm{det}}(d,d^{-4}) = \widetilde{\Theta}(d^\alpha),\]

or, if the answer is not a pure power of $d$, identify an explicit function $q(d)$ and prove

\[Q_{\mathrm{val}}^{\mathrm{det}}(d,d^{-4}) = \widetilde{\Theta}(q(d)).\]

Here $\widetilde{O}$, $\widetilde{\Omega}$, and $\widetilde{\Theta}$ may hide only fixed powers of log $d$. Since $\epsilon=d^{-4}$, logarithmic dependence on $1/\epsilon$ is also logarithmic in $d$. These symbols may not conceal an unspecified polynomial factor, an arbitrary $d^{o(1)}$ factor, or a change of oracle model. A characterization up to universal constant factors is also acceptable and is stronger than required.

Equivalently, it is acceptable to establish matching upper and lower bounds whose ratio is polylogarithmic in $d$. If the actual answer has a more complicated explicit asymptotic form than $d^\alpha$, prove that form rather than forcing it into a power law.

Assume for purposes of this task that a complete resolution in the exact model above is obtainable. Do not terminate merely by saying that the problem is open or that existing techniques leave a gap.

Verified baseline and literature checkpoint

The following facts motivate the task, but every use of them must be checked against the exact model above.

Protasov (1996), “Algorithms for approximate calculation of the minimum of a convexfunction from its values,” gives a deterministic value-only upper bound recorded in later literature as

\[O(d^2 \log d \log(1/\epsilon))\]

evaluations for an explicit convex domain. At $\epsilon=d^{-4}$, this is$\widetilde{O}(d^2)$.

Woodworth and Srebro (2019), “Open Problem: The Oracle Complexity of Convex Optimization with Limited Memory,” formulated a memory-query tradeoff problem for first-order convex optimization.

Marsden, Sharan, Sidford, and Valiant (2022), “Efficient Convex Optimization Requires Superlinear Memory,” prove a first-order tradeoff of the form

\[\mathrm{memory} \le d^{5/4-\delta}\ \mathrm{bits}\]
implies
\[\mathrm{queries} \ge \widetilde{\Omega}(d^{1+4\delta/3})\]

over an appropriate inverse-polynomial accuracy regime.

Blanchard, Zhang, and Jaillet (2023), “Quadratic Memory is Necessary for Optimal Query Complexity in Convex Optimization: Center-of-Mass is Pareto-Optimal,” prove that, at accuracy $1/d^4$, a deterministic first-order algorithm using at most

\[d^{2-\delta}\ \mathrm{bits}\]

requires

\[\widetilde{\Omega}(d^{1+\delta/3})\]

queries, for $\delta \in [0,1]$.

A frequently suggested argument stores the transcript of a $T$-query value-only algorithm and applies one of these memory lower bounds. If one could always store or replay the transcript in $\widetilde{O}(T)$ bits, the Marsden et al. tradeoff would suggest an exponent $8/7$, while the Blanchard-Zhang-Jaillet tradeoff would suggest an exponent $5/4$.

This inference is not automatic in the present task. Each value-oracle answer is an exact real number and can have unbounded bit complexity. Moreover, future queries may depend discontinuously on previous exact values. No solution may assume that one scalar answer occupies $O(\log d)$ bits, that all replies can harmlessly be rounded, or that a $T$-query transcript has $\widetilde{O}(T)$ bits. Any use of a memory-constrained first-order theorem must include a complete, model-preserving reduction or a new hard family whose exact replies provably carry only the claimed amount of usable information.

A complete solution must prove exactly the following

There must be a matching algorithmic upper bound and information-theoretic lower bound for $Q_{\mathrm{val}}^{\mathrm{det}}(d,d^{-4})$, up to polylogarithmic factors.

For the upper bound, give an explicit deterministic algorithm and prove that, for every $f$ in $F_d$:

every oracle query lies in $B_d$;

only exact function values are used;

the algorithm terminates after the claimed worst-case number of evaluations;

its output lies in $B_d$;

its output satisfies $f(x_{\mathrm{hat}}) - \min_{B_d} f \le d^{-4}$;

every auxiliary function evaluation is included in the count.

For the lower bound, prove that every deterministic adaptive exact-value algorithm using fewer than the claimed number of evaluations fails on at least one fixed function $f$ in $F_d$. The lower bound must apply even when the algorithm:

performs unlimited computation and exact real arithmetic;

uses arbitrary and discontinuous decision rules;

retains its complete exact transcript;

chooses arbitrary adaptive query points;

outputs a point that was never queried;

knows $d$, the unit ball, the Lipschitz constant, the normalization $f(0)=0$, and the target accuracy in advance.

A hard-instance proof may use a special subclass of $F_d$, such as polyhedral functions,support functions, maxima of affine functions, gauges, or distance functions, because a lower bound on a subclass is a valid lower bound for the full class. However, every hard function produced must actually be convex, 1-Lipschitz on $B_d$, normalized by $f(0)=0$, and consistent with all exact answers supplied during the interaction.

Closing the gap may take any mathematically correct form. For example:

a lower bound matching the existing $\widetilde{O}(d^2)$ upper bound;

a deterministic algorithm with exponent strictly below 2 together with a matching lower bound;

matching upper and lower bounds at an intermediate exponent;

a sharper non-power-law characterization.

A strict improvement on only one side does not count as a complete resolution.

Results that do not count

The following are insufficient unless they are accompanied by arguments that imply the complete matching result above.

An improved lower bound such as $d^{5/4}$, $d^{4/3}$, $d^{3/2}$, or $d^{2-o(1)}$ with no matching algorithmic upper bound.

An algorithm using $o(d^2)$ evaluations with no matching lower bound.

A randomized algorithm, including one that succeeds with high probability, in expectation, or after fixing a favorable random seed.

A randomized lower bound that is not correctly converted into a worst-case lower bound for every deterministic algorithm.

An algorithm or lower bound for a noisy, stochastic, rounded, $b$-bit, comparison-only, sign, order, bandit-feedback, or approximate-value oracle.

A finite-precision result without a rigorous lifting theorem to the exact-value oracle specified here.

A first-order, subgradient, separation, cutting-plane-oracle, membership-oracle,proximal-oracle, linear-minimization-oracle, or higher-order result without a complete reduction using only counted function evaluations.

A result restricted to linear-span algorithms, continuous algorithms, comparison-based algorithms, local algorithms, finite-difference algorithms, bounded-memory algorithms, algebraic decision trees of bounded degree, or any other proper subclass of deterministic exact-value algorithms.

An upper bound restricted to smooth, strongly convex, separable, symmetric, coordinatewise, polyhedral-with-few-facets, or otherwise specially structured objectives.

A lower bound that assumes the output must be one of the queried points.

A lower bound that treats a real oracle answer as one bit or one machine word.

A transcript-counting or entropy argument that has only finitely many possible answers after silently rounding exact values.

A proof that a rounded transcript is close to the exact transcript without proving that all later adaptive queries and the final guarantee remain valid under that rounding.

A resisting-oracle argument whose adaptive answers do not extend, after the interaction, to one fixed function $f$ in $F_d$.

A construction that is convex only on the queried points but has no valid global convex 1-Lipschitz extension to $B_d$.

A construction whose affine pieces have slopes of norm greater than 1, whose normalization is inconsistent, or whose claimed optimum is outside $B_d$.

A lower bound at constant accuracy, at an unspecified $1/\operatorname{poly}(d)$ accuracy, or at an accuracy not shown to imply the stated $d^{-4}$ result.

An algorithm whose query bound hides polynomial dependence on $1/\epsilon$ that becomes an additional power of $d$ when $\epsilon=d^{-4}$.

A reduction to another unresolved conjecture, an unproved “stability lemma,” or a global compatibility statement equivalent in strength to the original problem.

Computational verification through any fixed dimension.

Numerical evidence, performance-estimation experiments, symbolic searches, or candidate hard instances without a proof for arbitrary $d$.

A faster arithmetic implementation of an existing $d^2$-query method without reducing the number of function evaluations.

A literature survey or a statement that no known method closes the gap.

Use multiagent v2 aggressively and dynamically

Use up to $64$ concurrent agents. Do not use a fixed allocation such as “$N$ agents for lower bounds and $N$ agents for algorithms.” Continually create, merge, redirect, pause, and terminate agent lines according to their mathematical progress.

Begin with a genuinely diverse portfolio. Preserve substantial independence during early rounds: do not tell most agents which route currently looks most promising, and do not let one attractive reduction cause premature convergence.

Maintain an explicit registry of approach families. Group agents by their actual mathematical mechanism, not by superficial wording. For every family, record:

its central proposed lemma or construction;

what has been proved rigorously;

its exact remaining gap;

whether that gap is local or theorem-strength;

known counterexamples or failed variants;

whether the route is active, blocked, merged, or abandoned.

Initial approach families should include substantially different representatives ofat least the following kinds.

Resisting oracles and convex interpolation

Study adaptive value assignments that remain extendable to a convex 1-Lipschitzfunction. Seek exact interpolation or extension criteria for finite value data. Investigate max-of-affine extensions, convex envelopes, and mechanisms for retaining a large set of possible minimizers after many exact queries.

Do not assume that locally plausible answers have a common global extension. Produce the extension explicitly or prove a necessary-and-sufficient interpolation theorem.

Convex geometry and epigraph formulations

View a value query as information about the boundary of the epigraph. Explore polarity, support functions, gauges, hidden convex bodies, localization sets, volumetric arguments, widths, centroid methods, and duality between function minimization and geometric reconstruction.

Determine precisely how much geometric information one exact height measurement can supply. Do not replace a value query by a separation or membership query unless the reduction is proved in the correct direction with the correct number of calls.

Information, communication, and memory arguments

Explore reductions from communication problems, streaming problems, orthogonal-vector games, hidden subspaces, indexing, or memory-query tradeoffs. Design hard families whose exact function values are provably information-controlled even for arbitrary real queries.

Any route based on storing a value transcript must confront exact real precision directly. A useful lemma must explain why exact replies cannot encode or reveal the hidden instance too quickly, or why replies can be quantized without changing the algorithm’s behavior or success guarantee.

Topological and decision-theoretic lower bounds

Explore continuous and discontinuous decision trees separately, adversary dimensions, Borsuk-Ulam-type mechanisms, invariance-of-domain ideas, widths of function classes, topological complexity, and algebraic or measure-theoretic indistinguishability.

A lower bound for continuous decision rules does not apply to this task unless it is extended to arbitrary decision rules. Clearly identify every regularity assumption.

Polyhedral and support-function hard instances

Investigate maxima of many affine forms, distance-to-hidden-set functions, support functions of hidden bodies, nested ridges, multiscale facets, and adversarial arrangements. Exact values must not inadvertently identify all hidden facets or encode the minimizer in one scalar.

Require concrete formulas, norm bounds, optimum calculations, and transcript consistency proofs.

Value-to-separation and value-to-subgradient upper bounds

Search for deterministic procedures that extract useful separating information from fewer than $\Theta(d)$ new evaluations per localization step. Investigate approximate subgradients, simplex gradients, directional derivatives, asymmetric differences near the boundary, adaptive interpolation, and weak separation oracles.

Every approximate separator must have a proved error bound strong enough for the global optimization method. Do not call an approximation “a separator” without proving that it retains the true minimizer or yields the claimed objective guarantee.

Amortization and sample reuse

The standard $d^2$ pattern can be viewed as roughly $d$ localization stages with roughly $d$ new evaluations per stage. Explore whether values gathered in earlier stages can be reused, whether several cuts can be obtained from one interpolation model, whether multiscale sampling can amortize directional information, and whether the number of fresh probes per cut can decline as the localization region shrinks.

Account for all conditioning errors and for nonsmooth behavior. A heuristic that works for a stable gradient field is not sufficient for arbitrary convex Lipschitz functions.

Duality, conjugacy, and minimax formulations

Explore Fenchel conjugates, saddle formulations, support-function duality, bundle models, cutting-plane dual certificates, and minimax exchanges. Determine whether exact values provide indirect access to a dual object that can be optimized more efficiently.

Do not assume access to the conjugate, a maximizing subgradient, or a dual separation oracle unless it is constructed from counted value queries.

Multiscale localization and extremal arguments

Study nested localization, potential functions, center-of-gravity or volumetric progress, shallow cuts, ellipsoidal geometry, and extremal configurations of queried points. Seek either an invariant forcing $\Omega(d)$ information cost per effective cut or an algorithm that obtains global progress without explicitly reconstructing a cut.

Computational and symbolic sanity checks

Use low-dimensional linear programs, convex-extension feasibility programs, adversarial searches, and exact symbolic calculations to falsify proposed lemmas quickly. These experiments may guide proofs but never count as the final result.

Agents must return concrete mathematics

Require every agent to return at least one of the following:

a formally stated lemma with proof;

an explicit algorithm and proved invariant;

an explicit hard family and oracle transcript rule;

a complete reduction with all parameter transformations;

a concrete counterexample to a proposed lemma;

exact equations showing why a claimed exponent follows;

a finite-dimensional optimization program whose solution rigorously certifies a local claim.

Reject status reports, broad suggestions, vague optimism, and statements that a precision, extension, or compatibility step is “standard” or “routine.”

Do not let an approach dominate merely because it yields an elegant reduction. A route ending at a lemma equivalent in strength to the original gap is not close to completion. When a route stalls at a theorem-strength missing lemma, mark it blocked. Reopen it only when an agent supplies a materially new mechanism, invariant, construction, or counterexample that addresses the recorded obstruction.

Keep several incompatible routes alive through multiple rounds. Cross-pollinate only after independent agents have developed their routes far enough that their true strengths and gaps are visible.

Required adversarial audit for every lower bound

Assign independent adversarial agents to check all of the following.

Exact-oracle fidelity

Does the proof apply to exact real-valued answers, or only to rounded, noisy, or finite-alphabet replies?

Infinite-information loopholes

Can a reply’s low-order digits encode the hidden instance? Can the algorithm choose a query designed to decode such information? Does the proof improperly count scalar answers rather than bits?

Arbitrary adaptivity

Does the lower bound cover discontinuous query maps and exact comparisons, or only stable or continuous algorithms?

Fixed-function consistency

After the complete adaptive interaction, is there one fixed $f$ in $F_d$ producing every answer exactly? A sequence of answers chosen from different functions is invalid.

Global convex extension

Are all transcript values jointly extendable to a convex function on the entire ball? Is the extension real-valued and 1-Lipschitz?

Normalization

Does the final hard function satisfy $f(0)=0$ without altering the transcript or leaking the hidden instance?

Lipschitz constant

For max-of-affine constructions, does every slope have Euclidean norm at most 1? For other constructions, is the global Lipschitz bound proved?

Output handling

Does the proof defeat an arbitrary output point, including one never queried? It is permissible to append the output as one extra query if the reduction explicitly justifies why this changes the complexity by only one.

Optimization gap

Is the final output gap strictly greater than $d^{-4}$, with all scaling and constants verified?

Domain fidelity

Are all algorithm queries allowed by the model, and is the adversary defined for every possible query in $B_d$?

Deterministic universality

Does the lower bound quantify over every deterministic algorithm, rather than a restricted algorithmic family?

Quantifier order

The proof must establish: for every algorithm using fewer than $T$ queries, there exists a fixed $f$ in $F_d$ on which it fails. Check that randomness used to sample a hard instance, if any, is converted correctly into such a deterministic worst-case statement.

Imported-theorem alignment

Do all imported memory, communication, interpolation, or convex-geometry theorems have exactly the needed hypotheses? Check dimension, norm, accuracy, oracle response, randomness, output, memory unit, and query-domain conventions.

Noncircularity

Does the main missing lemma secretly assert that value queries cannot solve the optimization problem faster, or otherwise restate the desired lower bound?

Required adversarial audit for every upper bound

Assign separate agents to check all of the following.

General nonsmoothness

Does the algorithm work at kinks, on flat faces, and when every queried point relevant to the construction is nondifferentiable?

Boundary behavior

Are all finite-difference, interpolation, or line-search points inside $B_d$, including when the current center is near the boundary?

Separator validity

If the algorithm constructs an approximate subgradient or cut, is its error proved for all convex 1-Lipschitz functions? Does the retained localization region still contain an optimizer or a $d^{-4}$-optimal point?

Accumulated error

Do approximation errors remain controlled over all stages, rather than being analyzed for one isolated step?

No hidden oracle

Does every claimed gradient, support plane, membership answer, or comparison arise solely from explicitly counted function evaluations?

Unknown optimum

Does the method avoid assuming knowledge of min $f$, an optimal point, or a valid initial lower bound not derivable from the model?

Query accounting

Are initialization, calibration, binary search, rejected probes, repeated evaluations, and final certification all included?

Worst-case rather than generic behavior

Does the query count hold for every $f$ in $F_d$, rather than almost every function or nondegenerate instances?

Determinism

Are there no random directions, random rotations, random walks, randomized rounding, Monte Carlo estimates, or probabilistic success events?

Accuracy substitution

After setting $\epsilon=d^{-4}$, do all factors involving $\epsilon$ remain within the claimed dimension bound?

Feasibility and output

Is $x_{\mathrm{hat}}$ in $B_d$, and is the guarantee an objective-value guarantee rather than only distance to a model minimizer, small estimated gradient, or small localization volume?

Uniform constants

Are all constants and sufficiently-large-$d$ conditions explicit enough to establish the claimed asymptotic theorem?

Root-agent responsibilities

The root agent must repeatedly synthesize proved components, challenge all unproved interfaces, redirect resources away from duplicated approaches, and launch new rounds after failures.

Maintain separate ledgers for:

proved upper-bound components;

proved lower-bound components;

exact-value precision issues;

model-alignment issues;

false lemmas and their counterexamples;

blocked theorem-strength gaps;

exponent calculations and polylogarithmic losses.

When a candidate complete proof appears, freeze exploratory expansion temporarily and launch several independent audits. At least one audit should attempt to construct an algorithm violating the proposed lower bound, one should attempt to construct a function violating the proposed upper-bound invariant, one should inspect every precision assumption, and one should recompute all exponents and accuracy scalings from scratch.

If the proof fails audit, record the exact failure and resume diverse search. Do not allow a repaired proof to inherit approval from the failed version; audit the repaired argument again.

Public search and imported results

Public search may be used to retrieve primary sources, verify standard named theorems, check model definitions, and determine whether a claimed recent result actually applies. It may not substitute an abstract, citation, or literature consensus for a proof.

Any imported theorem must be stated with all hypotheses needed here and its role in the argument must be proved. In particular, do not cite a memory-query theorem as a zeroth-order lower bound without proving the exact transcript reduction.

Do not terminate merely because papers describe the problem as open. Conversely, if a published proof is located, reproduce its mathematical argument, verify that it uses the exact oracle and function class above, and subject it to the same adversarial audit.

Final response requirements

Return only after a complete resolution survives audit.

The final response must contain:

a precise theorem identifying $Q_{\mathrm{val}}^{\mathrm{det}}(d,d^{-4})$ up to polylogarithmic factors;

the complete deterministic upper-bound algorithm, including pseudocode or an equivalently precise construction;

a proof of its correctness and complete oracle-call count;

the complete lower-bound theorem for arbitrary deterministic exact-value algorithms;

the hard construction or adversary and proof of fixed-function consistency;

every reduction and precision lemma used;

explicit exponent and accuracy calculations;

a final model-compliance audit addressing every item above.

Do not return a one-sided improvement, reduction, missing lemma, computational result, candidate construction, “best effort” report, or explanation of why the problem is difficult. Do not answer merely that the problem remains open.

Use the full available computation and agent budget. Do not stop after the first wave of approaches fails. Continue launching genuinely new rounds, while reopening blocked routes only when a materially new mechanism is available.

Return only when matching upper and lower bounds for the exact minimax quantity stated above have been proved and the proof has survived independent adversarial audit.

\end{document}